\newtheorem{thm}{Theorem}[section]
\newtheorem{prop}[thm]{Proposition}
\newtheorem{lem}[thm]{Lemma}
\newtheorem{conjecture}[thm]{Conjecture}
\theoremstyle{definition}
\newtheorem*{rem}{Remark}
\numberwithin{equation}{section}
\DeclarePairedDelimiter\floor{\lfloor}{\rfloor}
\definecolor{red1}{HTML}{B02400}
\def\rcol#1{{\color{red1}#1}}
\def\bcol#1{{\color{blue}#1}}
\def\abs#1{\lvert#1\rvert}
\def\sif{{SIF}\;}
\def\Sif{\mathcal{S}^{\hskip0.7pt\mathsf{sif}}}
\def\Ind{\mathcal{S}^{\hskip0.7pt\mathsf{ind}}}
\def\Der{\mathcal{S}^{\hskip0.5pt\mathsf{der}}}
\def\oeis#1{\cite[#1]{oeis}}
\DeclareMathOperator\red{red}
\DeclareMathOperator\id{id}
\def\R{\rule[-1ex]{0ex}{3.6ex}}
\tikzstyle{mesh}=[pattern=north east lines, pattern color=gray, draw=gray]
\newcommand{\plotpermutation}[1]{%
\def\Permutation{#1}

\StrDel{\Permutation}{,}[\OnelineP]
\StrLen{\OnelineP}[\N]

\def\rad{0.15}
\begin{scope}
\clip (0.5,0.5) rectangle (\N.5,\N.5);
\draw[gray!60] (0,0) grid (\N+1,\N+1);
\foreach [count=\i] \y in \Permutation {%
  \draw[fill] (\i,\y) circle (\rad);
  }
\end{scope}
}
\begin{document}
\title{Pattern-avoiding stabilized-interval-free permutations}
\author{Daniel Birmajer}
\address{Nazareth College\\ 4245 East Ave.\\ Rochester, NY 14618}
\email{abirmaj6@naz.edu}

\author[Juan Gil]{Juan B. Gil}
\address{Penn State Altoona\\ 3000 Ivyside Park\\ Altoona, PA 16601}
\email{jgil@psu.edu}

\author[Jordan Tirrell]{Jordan O. Tirrell}
\address{Washington College\\ 300 Washington Avenue\\ Chestertown, MD 21620}
\email{jtirrell2@washcoll.edu}

\author[Michael Weiner]{Michael D. Weiner}
\address{Penn State Altoona\\ 3000 Ivyside Park\\ Altoona, PA 16601}
\email{mdw8@psu.edu}


\begin{abstract}
In this paper, we study pattern avoidance for stabilized-interval-free (SIF) permutations. These permutations are contained in the set of indecomposable permutations and in the set of derangements. We enumerate pattern-avoiding \sif permutations for classical and pairs of patterns of size 3. In particular, for the patterns $123$ and $231$, we rely on combinatorial arguments and the fixed-point distribution of general permutations avoiding these patterns. We briefly discuss $123$-avoiding permutations with two fixed points and offer a conjecture for their enumeration by the distance between their fixed points. For the pattern $231$, we also give a direct argument that uses a bijection to ordered forests.
\end{abstract}

\maketitle

\section{Introduction}

In 2004 D.~Callan \cite{Callan04} introduced the notion of a {\em stabilized-interval-free} (SIF) permutation. Let $\sigma$ be a permutation on $[n]=\{1,\dots,n\}$ and let $[i,j]$ be an interval. Recall that $i$ is a fixed point of $\sigma$ if $\sigma(i)=i$. Moreover, $[i,j]$ is a stabilized interval of $\sigma$ if $\sigma(k)\in[i,j]$ for every $k\in[i,j]$. The permutation $\sigma$ is called \sif if it does not stabilize any proper subinterval of $[n]$. For example, the permutation $562314$ (one-line notation) is \sif but $634215$ is not since $[2,4]$ is a stabilized interval. Observe that the reduced permutation\footnote{$\red(\sigma)$ is obtained by replacing the $i$th smallest letter of $\sigma$ by $i$, for $i=1,\dots,|\sigma|$.} $\red(342)=231$ is SIF. Such a minimal component will be referred to as a \sif box:

\smallskip
\begin{center}
\begin{tikzpicture}[scale=0.5]
\draw[gray!50] (1,1)--(6,6);
\draw[very thick,gray!80] (2,2) rectangle (4,4);
\plotpermutation{6,3,4,2,1,5}
\end{tikzpicture}
\end{center}
A fixed point is a \sif box of size 1, and adjacent transpositions are \sif boxes of size 2. In general, a \sif box of size $m$ in a permutation is a stabilized interval of length $m$ whose reduced permutation is SIF.

As discussed in \cite{Callan04}, \sif permutations can be used as building blocks to decompose permutations into noncrossing components and are therefore naturally related to other combinatorial families structured by noncrossing partitions. For example, F.~Ardila et al.\ \cite{ARW16} showed that \sif permutations are in bijection to connected positroids, and N.~Blitvi{\'c} \cite{NB14} introduced the set of chord-connected permutations which, in a similar way, can be used as building blocks for \sif permutations.

Another related family of permutations are the {\em indecomposable} permutations studied by Comtet \cite{Comtet72} in 1972. These are permutations that do not stabilize a proper subinterval of $[n]$ that includes $1$. Denoting the above sets by $\Sif_n$ and $\Ind_n$, respectively, and letting $\mathcal{S}_n$ denote the set of permutations on $[n]$, we have $\Sif_n \subset \Ind_n \subset \mathcal{S}_n$. Moreover, $\Sif_n$ is contained in the set $\Der_n$ of fixed-point-free permutations (a.k.a.\ derangements) of size $n$. 
 
\begin{center}
\begin{tikzpicture}[scale=0.8]
\draw (0,0) rectangle (6,4);
\draw (2.3,2) circle(1.8);
\draw (3.7,2) circle(1.8);
\draw (3,2) circle (1);
\node[above=1] at (0.5,3.25) {$\mathcal{S}_n$};
\node[left=1] at (2.1,3) {$\Ind_n$};
\node[right=1] at (4,3) {$\Der_n$};
\node at (3,2.1) {$\Sif_n$};
\end{tikzpicture}
\end{center}

In this paper, we enumerate pattern-avoiding \sif permutations for classical and pairs of patterns of size 3. Several authors have studied pattern avoidance for indecomposable permutations (see e.g.\ \cite{GKZ16}) and for derangements (see \cite{RSZ03,Eli04}), but there seems to be no results in the literature pertaining to \sif permutations. For basic definitions and standard notation regarding permutation patterns, we refer the reader to the book by Kitaev \cite{Kitaev11}. Let us recall that for a permutation $\sigma = \sigma(1)\cdots\sigma(n)$, its reverse is the permutation $r(\sigma)=\sigma(n)\cdots\sigma(1)$ and its complement is the permutation $c(\sigma)=(n+1-\sigma(1))\cdots (n+1-\sigma(n))$. We let $\sigma^{-1}$ denote the inverse of $\sigma$ and $\sigma^{rc}=r(c(\sigma))$.

We denote by $\Sif_n(\sigma)$ the set of \sif permutations of size $n$ that avoid the pattern $\sigma$. Note that stabilized intervals are preserved under the inverse and reverse complement maps. As a consequence, we have the basic Wilf equivalence relations
\begin{equation} \label{eq:basic_equivalence}
\Sif_n(\sigma) \sim \Sif_n(\sigma^{-1}) \sim \Sif_n(\sigma^{rc}).
\end{equation}

For patterns of size 3, there are three equivalence classes (see Table~\ref{tab:3Patterns}), and in Section~\ref{sec:3-pattern}, we give enumerating formulas for $\Sif_n(\sigma)$ for every $\sigma\in\mathcal{S}_3$. For the enumeration of $\Sif_n(123)$ and $\Sif_n(231)$, we rely on combinatorial arguments together with known results on the fixed-points distribution of permutations in $\mathcal{S}_n(123)$ and $\mathcal{S}_n(231)$, respectively. Regarding $\Sif_n(231)$, we also give a direct argument that uses a bijection to ordered forests. In Section~\ref{sec:pair_of_patterns}, we examine pairs of patterns of size 3 and obtain formulas for the nonfinite sequences given in Table~\ref{tab:PairOfPatterns}. Surprisingly, the enumeration of $\Sif_n(123,231)$ led to a non-monotonic sequence for which we give a generating function and closed formulas.

Finally, given their connection to the set $\Sif_n(123)$, we revisit the set of permutations in $\mathcal{S}_n(123)$ having two fixed points. In the appendix, we organize such permutations by the distance between their fixed points and conjecture a new formula for their enumeration. 

\section{Avoiding a pattern of size 3}
\label{sec:3-pattern}

As for pattern-avoiding fixed-point-free permutations, there are three Wilf equivalence classes of \sif permutations avoiding a single pattern of size 3 (see Table~\ref{tab:3Patterns}).

\begin{table}[ht]
\small
\begin{tabular}{|c|l|c|c|} \hline
\R $\sigma\in\mathcal{S}_3$ & \hspace{80pt} $\abs{\Sif_n(\sigma)}$, $n\ge 1$  & OEIS & Section \\[2pt] \hline
\rule[-4ex]{0ex}{9.2ex} \parbox{4ex}{132\\ 213\\ 321} & 1, 1, 2, 5, 14, 42, 132, 429, 1430, 4862, 16796, 58786, \dots & A000108 & 2.1\\ \hline
\R 123 & 1, 1, 2, 5, 14, 44, 150, 496, 1758, 6018, 21782, 76414, \dots & A363431 & 2.2 \\ \hline
\rule[-2.5ex]{0ex}{6.1ex} \parbox{4ex}{231\\ 312} & 1, 1, 1, 2, 6, 18, 54, 170, 551, 1817, 6092, 20722, \dots & A363432 & 2.3 \\ \hline
\end{tabular}
\bigskip
\caption{$\sigma$-avoiding \sif permutations.}
\label{tab:3Patterns}
\end{table}

Before we proceed, we would like to highlight the fact that every \sif permutation $\tau$ of size $n>2$ must contain a $231$ or a $312$ pattern. Otherwise, $\tau$ would be in $S_n(231,312)$, the set of layered permutations\footnote{A permutation is called layered if it consists of a disjoint union of factors so that the letters decrease within each layer, and increase between the layers, \cite[Definition~2.1.11]{Kitaev11}.}. A layered permutation of size $n>2$ is never \sif because it is either decomposable or it is the permutation $n(n-1)\cdots21$, which has either a fixed point (when $n$ is odd) or a \sif box of size 2 (when $n$ is even).

Let us also recall the definitions of skew and direct sums of two permutations. The skew sum $\pi_1\ominus\pi_2$ is the permutation obtained by listing the elements of $\pi_1$, each increased by the size of $\pi_2$, followed by the elements of $\pi_2$. The direct sum $\pi_1\oplus\pi_2$ is the permutation obtained by listing the elements of $\pi_1$, followed by the elements of $\pi_2$ increased by the size of $\pi_1$. For example, $231\ominus 12 = 45312$ and $231\oplus 21 = 23154$.

\subsection{Enumeration of $\Sif_n(132)\sim \Sif_n(213)\sim \Sif_n(321)$}
We will start with the simple case of \sif permutations avoiding $321$ and will then discuss the enumeration of $\Sif_n(132)$. The equivalence to $\Sif_n(213)$ follows from \eqref{eq:basic_equivalence}. 

Let $C_n=\frac{1}{n+1}\binom{2n}{n}$ and $C(x)=\frac{1-\sqrt{1-4x}}{2x}=\sum\limits_{n=0}^\infty C_n x^n$.

\begin{thm} \label{thm:321avoiders}
For $n\ge 1$, we have $\Sif_n(321) = \Ind_n(321)$. Therefore, $\abs{\Sif_n(321)} = C_{n-1}$.
\end{thm}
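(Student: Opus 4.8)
The plan is to prove the set equality $\Sif_n(321)=\Ind_n(321)$ first, and then invoke the standard count of indecomposable $321$-avoiders. Since $\Sif_n\subset\Ind_n$, the inclusion $\Sif_n(321)\subseteq\Ind_n(321)$ is immediate, so the entire content is the reverse inclusion: I must show that an indecomposable, $321$-avoiding permutation stabilizes \emph{no} proper subinterval at all, not merely no prefix interval $[1,k]$. Concretely, assuming $\pi\in\Ind_n(321)$ stabilizes some interval $[a,b]$ with $a\ge 2$, I will derive a contradiction.

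The heart of the argument is to manufacture a $321$ pattern. Write the complement $K=[1,a-1]\cup[b+1,n]$, which is $\pi$-invariant because $[a,b]$ is. If $b=n$, then $K$ is the prefix $[1,a-1]$, which is thus itself stabilized, contradicting indecomposability; so assume $b<n$. The first subclaim is that some position $i\le a-1$ has $\pi(i)\ge b+1$: otherwise the positions $[1,a-1]$ would map into $[1,a-1]$, making $[1,a-1]$ a stabilized prefix and again contradicting indecomposability. The second subclaim is that some position $j\ge b+1$ has $\pi(j)\le a-1$: the $a-1$ values of $[1,a-1]$ must all be supplied from positions in $K$, and since one position of $[1,a-1]$ is already consumed by $i$ (which maps above $b$), at most $a-2$ of the positions in $[1,a-1]$ remain, so the positions $[b+1,n]$ must contribute at least one value in $[1,a-1]$. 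Now pick any $k\in[a,b]$, so that $\pi(k)\in[a,b]$. Then $i<k<j$ while $\pi(i)\ge b+1>\pi(k)>a-1\ge\pi(j)$, i.e.\ $\pi(i)>\pi(k)>\pi(j)$, a forbidden $321$. This contradiction shows $\pi$ is SIF, completing the set equality.

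For the enumeration it then suffices to count $\Ind_n(321)$. Here I would use that the class of $321$-avoiders is closed under direct sums, and that a direct sum $\alpha\oplus\beta$ of nonempty permutations avoids $321$ iff both factors do, since no decreasing triple can cross the two blocks (every entry of the lower-left block is both smaller and to the left of every entry of the upper-right block). Hence every $321$-avoider factors uniquely into indecomposable $321$-avoiding summands, so if $F(x)=\sum_{n\ge1}\abs{\Ind_n(321)}x^n$, then $\frac{1}{1-F(x)}=\sum_{n\ge0}C_nx^n=C(x)$. Solving gives $F(x)=1-1/C(x)=xC(x)$, whence $\abs{\Ind_n(321)}=C_{n-1}$, as claimed; alternatively one may simply cite the known value.

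I expect the reverse inclusion — specifically the second subclaim, locating a low value fed in from a position past $b$ — to be the main obstacle, since it is where the invariance of $K$ and the counting argument must be combined correctly; the $b=n$ boundary case also needs separate, if easy, attention.
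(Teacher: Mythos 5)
Your proposal is correct and takes essentially the same route as the paper: the reverse inclusion $\Ind_n(321)\subseteq\Sif_n(321)$ is proved by locating a position before the stabilized interval mapping above it and a position after it mapping below it, which together with any point of the interval form a forbidden $321$; your two subclaims simply spell out the existence of these positions, which the paper asserts from indecomposability in one sentence. The only difference is at the end: the paper cites the known Catalan count of $\Ind_n(321)$, while you rederive it via the standard direct-sum decomposition and the generating function identity $F(x)=1-1/C(x)=xC(x)$, which is a valid (and self-contained) substitute.
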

\begin{proof}
By definition, $\Sif_n(321)\subset \Ind_n(321)$. Suppose $\sigma\in\Ind_n(321)$ has a stabilized interval $[j,k]$, $1<j<k<n$. Since $\sigma$ is indecomposable, neither $[1,j-1]$ nor $[k+1,n]$ can be stabilized intervals, so there must exist $i$ and $\ell$ with $1\le i< j<k<\ell \le n$ and such that $\sigma(i)>k$ and $\sigma(\ell)<j$. This would create a $321$ pattern $(\sigma(i),j,\sigma(\ell))$, giving a contradiction. In other words, $\Ind_n(321)\subset \Sif_n(321)$ and we get equality. The fact that $321$-avoiding indecomposable permutations are counted by the Catalan numbers is known and not difficult to prove, see e.g.\ \cite[Theorem~3.2]{GKZ16}.
\end{proof}

\begin{thm}
For $n\ge 1$, we have $\abs{\Sif_n(132)} = C_{n-1}$.
\end{thm}
\begin{proof}
Any permutation $\sigma\in\mathcal{S}_n(132)$ with $\sigma(n)<n$ must be of the form $\sigma=\sigma_L\ominus \sigma_R$ and is therefore indecomposable. On the other hand, there are $C_{n-1}$ permutations in $\mathcal{S}_n(132)$ with $\sigma(n)=n$, thus $\abs{\Ind_n(132)} = C_n - C_{n-1}$.

For every $k\in\{1,\dots,n-2\}$, we define a bijective map $\phi$ between pairs of permutations in $\Sif_k(132)\times\Ind_{n-k}(132)$ and the set of permutations in $\Ind_{n}(132) \setminus \Sif_{n}(132)$ having a left-most \sif box of size $k$, as follows:

Given $(\rho,\tau)\in\Sif_k(132)\times\Ind_{n-k}(132)$, let $j$ be the smallest value for which 
\[ \min\{\tau(1),\dots,\tau(j)\}\le j. \]
Geometrically, $j$ is the minimum value such that $[1,j]\times[1,j]$ intersects the plot of $\tau$. Thus, the permutation $\tau(1)\cdots\tau(j-1)$ has no stabilized interval or fixed point, and there is no $21$ pattern in the sector $[j,n]\times[j,n]$. Now, inserting $j$ into $\tau$ as a fixed point, and inflating $j$ with the permutation $\rho$, gives a permutation $\phi(\rho,\tau)$ in $\Ind_{n}(132) \setminus \Sif_{n}(132)$ with a left-most \sif box of size $k$. For example, the permutation 894532167 can be created by inserting the \sif permutation $231$ into $562134$ at position $j=3$ as illustrated in Figure~\ref{fig:sif132}.
\begin{figure}[ht]
\begin{tikzpicture}[scale=0.5]
\begin{scope}
\plotpermutation{5,6,2,1,3,4}
\foreach \i in {2,3}{\draw[red1,dashed] (1,\i)--(\i,\i)--(\i,1);}
\draw[fill,red1] (1,1) circle (0.02);
\end{scope}
\begin{scope}[xshift=240]
\plotpermutation{6,7,3,2,1,4,5}
\draw[fill,red1] (3,3) circle (\rad);
\draw[gray,thick] (3,3) circle (\rad+0.09);
\end{scope}
\begin{scope}[xshift=500]
\plotpermutation{8,9,4,5,3,2,1,6,7}
\draw[very thick,gray!80] (3,3) rectangle (5,5);
\foreach \i/\j in {3/4,4/5,5/3}{\draw[fill,red1] (\i,\j) circle (\rad);}
\end{scope}
\end{tikzpicture}
\caption{Construction $562134 \mapsto 67\rcol{3}2145 \mapsto 89\rcol{453}2167$.}
\label{fig:sif132}
\end{figure}

Observe that by our definition of $j$, if $i<j$, then $\min\{\tau(1),\dots,\tau(i)\}>i$ and $i=\tau(k)$ for some $k>i$. Therefore, inserting $i$ into $\tau$ as a fixed point would create the $132$ pattern $(i,\tau(i)+1,i+1)$. In other words, $j$ is the first position where $\rho$ can be inserted into $\tau$ to create an element of $\Ind_{n}(132) \setminus \Sif_{n}(132)$. Now, given a permutation $\sigma$ in the target set with a left-most \sif box $\rho$ of size $k$, we let $\phi^{-1}(\sigma)$ be the pair $(\red(\rho),\red(\tau))$, where $\tau$ is obtained from $\sigma$ be removing $\rho$.

As a consequence of the above bijection, we have 
\[ \abs{\Ind_n(132)} = \abs{\Sif_n(132)}  + \sum_{k=1}^{n-2}\, \abs{\Sif_k(132)}\cdot \abs{\Ind_{n-k}(132)}, \] 
which leads to the equation $I_{132}(x) = A_{132}(x) + (I_{132}(x)-x)A_{132}(x)$ for their generating functions
$I_{132}(x)=\sum\limits_{n=1}^\infty\, \abs{\Ind_n(132)}x^n$ and $A_{132}(x)=\sum\limits_{n=1}^\infty\, \abs{\Sif_n(132)}x^n$.

Finally, since $I_{132}(x) = (1-x)(C(x)-1)$ (see \cite[Theorem 3.7]{GKZ16}), we get
\[ A_{132}(x) = \frac{I_{132}(x)}{1-x+I_{132}(x)} = xC(x), \]
thus $\abs{\Sif_n(132)} = C_{n-1}$.
\end{proof}

\begin{rem}
It turns out that the Knuth bijection from $\mathcal{S}_n(321)$ to $\mathcal{S}_n(132)$ (via RSK and Dyck paths, see \cite[Sec.~4.1.1]{Kitaev11}) bijectively maps $\Sif_n(321)$ to $\Sif_n(132)$. On the other hand, letting $\beta:\mathcal{S}_n(132)\to \mathcal{S}_n(123)$ be the bijective map given by Simion and Schmidt in \cite[Prop.~19]{SiSch85}, the composition $r\circ \beta$ of the reverse map with $\beta$ seems to bijectively map $\mathcal{S}_n(132)\setminus\Ind_{n}(132)$ to $\Sif_n(321)$. In particular, $\sigma\in\Sif_n(132)$ implies $r(\beta(\sigma))\not\in\Sif_n(321)$. It would be interesting to explore how the other known bijections between $\mathcal{S}_n(132)$ and $\mathcal{S}_n(321)$ handle the \sif components of the two sets.  
\end{rem}

\subsection{Enumeration of $\Sif_n(123)$}

In order to count the elements of $\Sif_n(123)$, we rely on the enumeration of $123$-avoiding permutations by the number of fixed points. Note that permutations in $\mathcal{S}_n(123)$ can have at most two fixed points. Let $\mathrm{fp}(\sigma)$ denote the number of fixed points of $\sigma$. For $k\in\{0,1,2\}$, let 
\[ f^{(k)}_n(123) = \abs{\{\sigma\in\mathcal{S}_n(123): \mathrm{fp}(\sigma)=k\}}. \]

\begin{prop}[{\cite[Corollary~3.2]{Eli04}}]
For $n\ge 1$, we have
\begin{align*}
  f^{(0)}_n(123) &=
  \begin{cases}
  C_n - 2C_{n-1} + f^{(2)}_n(123) &\text{if $n$ is even,} \\
  C_n - 2C_{n-1} + f^{(2)}_n(123) + C_{\frac{n-1}{2}}^2 &\text{if $n$ is odd,}
  \end{cases} 
  \\[3pt]
  f^{(1)}_n(123) &=
  \begin{cases}
  2\big(C_{n-1} - f^{(2)}_n(123)\big) &\text{if $n$ is even,} \\
  2\big(C_{n-1} - f^{(2)}_n(123)\big) - C_{\frac{n-1}{2}}^2 &\text{if $n$ is odd.}
  \end{cases}
\end{align*}
\end{prop}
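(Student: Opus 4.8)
The plan is to determine $f^{(0)}_n(123)$ and $f^{(1)}_n(123)$ from two linear relations, treating $f^{(2)}_n(123)$ as a known quantity. The first relation is the total count $f^{(0)}_n(123) + f^{(1)}_n(123) + f^{(2)}_n(123) = \abs{\mathcal{S}_n(123)} = C_n$, using that a $123$-avoiding permutation has at most two fixed points (three fixed points $i<j<k$ would give the increasing subsequence $i,j,k$, i.e.\ a $123$ pattern). The second relation records the total number of fixed points,
\[ F_n := \sum_{\sigma \in \mathcal{S}_n(123)} \mathrm{fp}(\sigma) = f^{(1)}_n(123) + 2 f^{(2)}_n(123). \]
Solving the $2\times 2$ system gives $f^{(1)}_n(123) = F_n - 2 f^{(2)}_n(123)$ and $f^{(0)}_n(123) = C_n - F_n + f^{(2)}_n(123)$, so the whole proposition reduces to establishing the closed form
\[ F_n = \begin{cases} 2C_{n-1}, & n \text{ even}, \\ 2C_{n-1} - C_{(n-1)/2}^2, & n \text{ odd}. \end{cases} \]

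To compute $F_n$, I would count incidences $(\sigma, i)$ with $\sigma\in\mathcal{S}_n(123)$ and $\sigma(i) = i$. The key structural observation is that a fixed point forces a dichotomy: if some value left of position $i$ were smaller than $i$ while some value right of $i$ were larger than $i$, these two values together with $i$ would form a $123$ pattern. Hence every incidence is of (at least) one of two types — Type~I, where all entries left of $i$ exceed $i$, or Type~II, where all entries right of $i$ are below $i$. Writing $T_{\mathrm{I}}$ and $T_{\mathrm{II}}$ for the number of incidences of each type and $O_n$ for those of both types simultaneously, inclusion–exclusion gives $F_n = T_{\mathrm{I}} + T_{\mathrm{II}} - O_n$.

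The overlap $O_n$ is easy to evaluate. An incidence of both types has all big values ($>i$) left of position $i$ and all small values ($<i$) right of it; comparing cardinalities forces $i - 1 = n - i$, so $O_n = 0$ unless $n$ is odd, in which case $i = (n+1)/2$ is central. There the left half holds exactly the $(n-1)/2$ big values and the right half exactly the $(n-1)/2$ small values; since every left value exceeds every right value, no $123$ pattern can cross the center, so the two halves are independent $123$-avoiders, giving $O_n = C_{(n-1)/2}^2$. By the reverse-complement map, which preserves $123$-avoidance and sends a fixed point at $i$ to one at $n+1-i$ (interchanging the two types), we get $T_{\mathrm{I}} = T_{\mathrm{II}}$, so it remains to prove $T_{\mathrm{I}} = C_{n-1}$.

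This last step is the main obstacle. Numerically $T_{\mathrm{I}} = C_{n-1}$ holds for every $n$, but the natural candidate map — delete the marked fixed point and standardize — is \emph{not} a bijection onto $\mathcal{S}_{n-1}(123)$: a permutation carrying two Type~I fixed points is counted twice, so the fibers have sizes $0,1,2,\dots$ rather than $1$, and the identity must be argued globally rather than fiberwise. To prove $T_{\mathrm{I}} = C_{n-1}$ I would pass through the standard correspondence between $\mathcal{S}_n(123)$ and Dyck paths and track what a Type~I fixed point becomes on the path side — in the spirit of Elizalde's analysis \cite{Eli04}, where fixed points of $123$-avoiders correspond to centered tunnels — thereby reducing $T_{\mathrm{I}} = C_{n-1}$ to a Catalan identity for marked Dyck paths; alternatively, a generating-function argument for the refined count $N_{\mathrm{I}}(\tau) = \#\{i : (\tau,i)\text{ lifts to a Type~I incidence}\}$ summed over $\tau\in\mathcal{S}_{n-1}(123)$ should collapse to $C_{n-1}$. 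Once $T_{\mathrm{I}} = C_{n-1}$ is established, substituting $F_n$ into $f^{(1)}_n(123) = F_n - 2f^{(2)}_n(123)$ and $f^{(0)}_n(123) = C_n - F_n + f^{(2)}_n(123)$ recovers both displayed formulas, with the odd-$n$ correction $C_{(n-1)/2}^2$ coming precisely from the central overlap $O_n$.
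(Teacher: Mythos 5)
Your reduction is sound as far as it goes: the two linear relations are correct (a $123$-avoider indeed has at most two fixed points), the Type~I/Type~II dichotomy is exactly the right structural observation, the overlap count $O_n = C_{(n-1)/2}^2$ for odd $n$ (and $0$ for even $n$) is computed correctly, and reverse--complement does swap the two types, so the proposition is indeed equivalent to the single identity $T_{\mathrm{I}} = C_{n-1}$. But that identity is where all the combinatorial content of the statement lives, and you do not prove it: you verify it numerically, observe that the naive deletion map fails to be a bijection, and then defer to ``a Catalan identity for marked Dyck paths'' or a generating-function argument ``in the spirit of Elizalde'' without carrying either out. This is a genuine gap, not a routine finishing step: the set of Type~I incidences has nontrivial structure (for instance, the values above $i$ lying to the right of position $i$ must form a decreasing sequence, and their interleaving with the values below $i$ is constrained by the ascents of the prefix), and establishing the count $C_{n-1}$ is essentially of the same depth as the proposition itself. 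It is worth knowing that the paper does not prove this proposition either --- it is imported verbatim from Elizalde \cite[Corollary~3.2]{Eli04}, whose proof passes to Dyck paths --- so the honest alternatives are to cite it as the paper does, or to actually execute the path/tunnel argument you sketch.

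There is also a smaller error in your discussion of why the deletion map fails: you say its fibers can have size $\ge 2$ because ``a permutation carrying two Type~I fixed points is counted twice.'' In fact no permutation has two Type~I fixed points: if $i_1 < i_2$ were both fixed points and $i_2$ were Type~I, then the entry $i_1 < i_2$ sitting to the left of position $i_2$ would violate the Type~I condition at $i_2$. The map $(\sigma,i)\mapsto\tau$ genuinely fails to be a bijection, but for a different reason: distinct marked permutations can yield the same $\tau$, and some $\tau$ have empty fiber (for $n=3$, both $(132,1)$ and $(321,2)$ map to $21$, while $12$ has no preimage). This does not affect your conclusion that a fiberwise argument breaks down, but the misdiagnosis suggests the remaining work is less mechanical than the proposal implies.
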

For $f^{(2)}_n(123)$, S.~Elizalde \cite[Theorem~3.3]{Eli04} gives a formula that we do not need to include here. In the appendix, we approach the counting of permutations in $\mathcal{S}_n(123)$ with two fixed points from a different perspective and conjecture a simpler formula for their enumeration.

\medskip
For $2\le m\le n$, let $\mathcal{B}_n^{(m)}(123)$ be the set of fixed-point-free permutations $\sigma\in\mathcal{S}_n(123)$ such that $\sigma$ contains a smallest \sif box of size $m$. Let $b_n^{(m)}(123) = \abs{\mathcal{B}_n^{(m)}(123)}$.

\begin{lem} \label{lem:2box}
For $n\ge 4$,
\begin{equation*}
 b_n^{(2)}(123) = f_{n-1}^{(1)}(123) - f_{n-2}^{(2)}(123).
\end{equation*}
\end{lem}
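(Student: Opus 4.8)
The plan is to realize $b_n^{(2)}(123)$ as a count of one-fixed-point $123$-avoiders of size $n-1$, corrected by a term counting a distinguished excluded family. The underlying device is a local operation that trades a fixed point for an adjacent transposition. A size-$2$ \sif box occupies an interval $[j,j+1]$ with $\sigma(j)=j+1$ and $\sigma(j+1)=j$; \emph{contracting} it deletes positions and values $j,j+1$ and inserts a single fixed point at $j$, while \emph{expanding} a fixed point $i$ (where $\tau(i)=i$) replaces the diagonal point $(i,i)$ by the descending pair $(i,i+1),(i+1,i)$ and shifts the remaining points accordingly. I would first check that contraction and expansion are mutually inverse.

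The crucial structural fact I would establish is that expansion (hence contraction) preserves $123$-avoidance. A fixed point $(i,i)$ of a $123$-avoider splits its plot into four quadrants, and avoidance forces the lower-left and the upper-right quadrants each to be decreasing and forbids the simultaneous presence of a lower-left point and an upper-right point (any such pair would complete a $123$ through $(i,i)$). Since the two box-points are decreasing, at most one of them can lie in a putative $123$ of the expanded permutation; a short case analysis over its role (smallest, middle, or largest) and over which box-point is used shows that each case would force a genuine $123$ through the original fixed point. This verification, with its attendant bookkeeping of the coordinate shifts, is the step I expect to be the main obstacle.

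With $123$-avoidance in hand, I would define $\Phi$ on $\mathcal{B}_n^{(2)}(123)$ by contracting the \emph{leftmost} size-$2$ box. Because $\sigma$ is a derangement, one checks that the contracted box yields the \emph{only} fixed point of the image (points off the box retain their off-diagonal relation, so none becomes fixed), so $\Phi(\sigma)\in\mathcal{S}_{n-1}(123)$ with $\mathrm{fp}=1$; expanding that unique fixed point returns $\sigma$, so $\Phi$ is injective. A permutation $\tau\in\mathcal{S}_{n-1}(123)$ with $\mathrm{fp}(\tau)=1$, say with fixed point $i$, lies in the image precisely when expanding its fixed point creates no size-$2$ box to the left of $[i,i+1]$; as the positions left of $i$ are untouched by the expansion, this holds exactly when $\tau$ has no size-$2$ box among positions $<i$, i.e.\ none in the lower-left quadrant of its fixed point. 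Hence $b_n^{(2)}(123)$ equals $f_{n-1}^{(1)}(123)$ minus the number of such one-fixed-point $\tau$ that \emph{do} carry a lower-left box.

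Finally I would count the excluded family. Since the lower-left quadrant is decreasing, it can host at most one size-$2$ box (two disjoint boxes would create an ascent among lower-left values), so the distinguished box is unique. Contracting it sends an excluded $\tau$ to some $\tau'\in\mathcal{S}_{n-2}(123)$ with exactly two fixed points, namely the former fixed point and the one just created. Conversely, a two-fixed-point $123$-avoider cannot have a size-$2$ box to the left of its smaller fixed point, since that box together with the two fixed points would form a $123$; therefore expanding the smaller fixed point inverts the contraction and lands back in the excluded family. This yields a bijection between the excluded family and $\{\tau'\in\mathcal{S}_{n-2}(123):\mathrm{fp}(\tau')=2\}$, of size $f_{n-2}^{(2)}(123)$, and the stated identity $b_n^{(2)}(123)=f_{n-1}^{(1)}(123)-f_{n-2}^{(2)}(123)$ follows.
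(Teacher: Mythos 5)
Your proof is correct and is essentially the paper's argument: both rest on the inflation/contraction correspondence between fixed points and size-2 \sif boxes, with the correction term $f_{n-2}^{(2)}(123)$ identified with two-fixed-point $123$-avoiders of size $n-2$. The only difference is bookkeeping: the paper inflates the fixed point to get a surjection onto $\mathcal{B}_n^{(2)}(123)$ that is 2-to-1 exactly on permutations with two boxes and applies inclusion--exclusion, while you contract the \emph{leftmost} box to get an injection and count the complement of its image --- dual formulations of the same count.
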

\begin{proof}
Every permutation $\sigma\in \mathcal{S}_{n-1}(123)$ having exactly one fixed point generates a permutation $\hat\sigma\in \mathcal{B}_n^{(2)}(123)$, created by inflating the fixed point with a $21$ box: If $\sigma(j)=j$, replace $j$ by $(j+1)j$, and relabel every value $k$ in $\sigma$ bigger than $j$ (using $k \mapsto k+1$ if  $k>j$).
For example, $54\rcol{3}12\mapsto 65\bcol{43}12$. This map is surjective, but not injective. For instance:
\begin{align*}
    5\rcol{2}431 &\mapsto 6\bcol{32}541, \\
    532\rcol{4}1 &\mapsto 632\bcol{54}1.
\end{align*}
The elements counted twice correspond to those that can be obtained from permutations in $\mathcal{S}_{n-2}(123)$ having two fixed points. For example:
  \begin{equation*}
    4\rcol{2}31 \mapsto 5\bcol{32}\rcol{4}1 \mapsto 6\bcol{3254}1.
  \end{equation*}
By inclusion-exclusion, we get the claimed formula.
\end{proof}

\begin{thm}\label{thm:123avoiders}
Let $a_n(123) = \abs{\Sif_n(123)}$. We have $a_1(123)=a_2(123)=1$, and for $n\ge 3$,
\begin{equation*}
 a_n(123) = f_n^{(0)}(123) - f_{n-1}^{(1)}(123) + f_{n-2}^{(2)}(123) - \sum_{k=1}^{\floor{\frac{n-3}{2}}} C_k^2\, a_{n-2k}(123).
\end{equation*}
This implies
\[ A_{123}(x) = F_0(x) - xF_1(x) + x^2F_2(x) - Q(x^2)(A_{123}(x) - x - x^2), \]
where $A_{123}(x)=\sum\limits_{n=1}^\infty a_n(123)x^n$, $F_k(x)=\sum\limits_{n=1}^\infty f_n^{(k)}(123) x^n$, and $Q(x) = \sum\limits_{n=0}^\infty C_n^2\, x^n$.
\end{thm}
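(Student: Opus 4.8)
The plan is to prove the recurrence combinatorially and then read off the generating-function identity by a routine translation. Since \sif permutations of size $n\ge 2$ are exactly the fixed-point-free $123$-avoiders whose smallest \sif box is all of $[n]$, we have $b_n^{(n)}(123)=a_n(123)$, and partitioning the derangements in $\mathcal{S}_n(123)$ by the size of their smallest \sif box gives
\[ f_n^{(0)}(123) = \sum_{k=2}^{n} b_n^{(k)}(123) = a_n(123) + b_n^{(2)}(123) + \sum_{m=3}^{n-1} b_n^{(m)}(123). \]
Lemma~\ref{lem:2box} already evaluates the $k=2$ term as $f_{n-1}^{(1)}(123)-f_{n-2}^{(2)}(123)$, so the entire theorem reduces to the single claim
\[ \sum_{m=3}^{n-1} b_n^{(m)}(123) = \sum_{k=1}^{\floor{(n-3)/2}} C_k^2\, a_{n-2k}(123), \]
which I would establish term by term, showing $b_n^{(m)}(123)=C_k^2\,a_m(123)$ when $n-m=2k$ is even and $b_n^{(m)}(123)=0$ when $n-m$ is odd, for $3\le m\le n-1$.

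The heart of the argument is a rigidity statement for the frame surrounding a \sif box of size $m\ge 3$. Fix $\sigma\in\mathcal{B}_n^{(m)}(123)$ and let its smallest \sif box occupy the stabilized interval $[p+1,p+m]$ (both as positions and as values). The key feature absent when $m=2$ is that a \sif permutation of size $\ge 3$ is a derangement that is not decreasing, and hence contains an ascent inside the box. I would argue that such an ascent, together with $123$-avoidance, forbids any frame position to the right of the box from carrying a value above the box values, and symmetrically forbids any frame position to the left from carrying a value below the box values. Since the box values $\{p+1,\dots,p+m\}$ are already used inside the box, this forces the $p$ positions before the box to receive values from $\{p+m+1,\dots,n\}$ and the $n-p-m$ positions after it to receive values from $\{1,\dots,p\}$; counting on each side gives $p\le n-p-m$ and $n-p-m\le p$, hence $p=n-p-m$, i.e.\ $n-m=2p$. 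Thus $b_n^{(m)}(123)=0$ when $n-m$ is odd, and otherwise $p=k=(n-m)/2$.

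When $n-m=2k$, the same inequalities show that $\sigma$ splits into a top-left block in positions $1,\dots,k$ using the values $n-k+1,\dots,n$, the box, and a bottom-right block in positions $k+m+1,\dots,n$ using the values $1,\dots,k$, exactly as in the two-fixed-point picture~\eqref{pic:twoFP}. I would then verify three points: each corner block must itself avoid $123$ (any internal occurrence is a $123$ pattern of $\sigma$, while cross occurrences among the two blocks and the box are impossible once the values are separated as above), contributing a factor $C_k$ each; the reduced box ranges freely over $\Sif_m(123)$, contributing $a_m(123)$; and the resulting permutation is genuinely fixed-point-free with $[p+1,p+m]$ as its unique smallest stabilized interval, since the corner blocks consist entirely of excedances, resp.\ deficiencies, and contain no stabilized subinterval because their positions and values are disjoint. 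This yields the bijection underlying $b_n^{(m)}(123)=C_k^2\,a_m(123)$.

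Substituting these evaluations into the partition of $f_n^{(0)}(123)$ and solving for $a_n(123)$ produces the stated recurrence for $n\ge 3$, while the base cases $a_1(123)=a_2(123)=1$ are immediate. The generating-function identity then follows by multiplying the recurrence by $x^n$, summing over $n\ge 3$, and recognizing $\sum_{k\ge 0}C_k^2 x^{2k}=Q(x^2)$ alongside the series $F_0,F_1,F_2$, after accounting for the low-order corrections $f_1^{(1)}(123)=1$ and $f_2^{(0)}(123)=1$ that arise when the truncated sums are completed to the full generating functions. I expect the main obstacle to be the rigidity step: carefully excluding unbalanced and boundary placements of the box and confirming that no stabilized subinterval smaller than the box can survive. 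This is exactly where the hypothesis $m\ge 3$ is essential, since it guarantees the box has an ascent, and it is also where the case $m=2$ genuinely departs from the pattern and must be treated separately via Lemma~\ref{lem:2box}.
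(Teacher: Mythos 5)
Your combinatorial argument for the recurrence is correct and is essentially the paper's own proof: the paper likewise partitions the fixed-point-free $123$-avoiders by the size of their smallest \sif box, handles size $2$ via Lemma~\ref{lem:2box}, and for $m\ge 3$ uses the ascent inside the box to conclude $\sigma=\sigma_1\ominus\beta\ominus\sigma_2$ with $\abs{\sigma_1}=\abs{\sigma_2}$, giving $b_n^{(m)}(123)=C_k^2\,a_{n-2k}(123)$ when $n-m=2k$ and $0$ when $n-m$ is odd. Your ``rigidity'' paragraph simply makes explicit what the paper asserts in one sentence, and you carry it out correctly, including the verification that the constructed permutations have no stabilized interval smaller than the box.

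The problem is the final generating-function step, which you declare routine but never execute; executed honestly, it does not produce the displayed identity. Absorb $a_n$ as the $k=0$ term and write the recurrence as $\sum_{k=0}^{\floor{(n-3)/2}} C_k^2\, a_{n-2k}(123) = f_n^{(0)}(123)-f_{n-1}^{(1)}(123)+f_{n-2}^{(2)}(123)$ for $n\ge 3$; summing against $x^n$ (your low-order corrections $f_1^{(1)}=1$ and $f_2^{(0)}=1$ cancel each other) yields
\[ Q(x^2)\big(A_{123}(x)-x-x^2\big) \;=\; F_0(x)-xF_1(x)+x^2F_2(x), \]
equivalently $A_{123}(x)=x+x^2+F_0(x)-xF_1(x)+x^2F_2(x)-\big(Q(x^2)-1\big)\big(A_{123}(x)-x-x^2\big)$. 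This is not the equation in the theorem: there $Q(x^2)$ appears where $Q(x^2)-1$ belongs, and the terms $x+x^2$ are absent. Indeed, by the identity above, the right-hand side of the theorem's displayed equation is identically zero, so that equation would force $A_{123}(x)=0$; numerically, $F_0-xF_1+x^2F_2 = 2x^3+5x^4+16x^5+49x^6+\cdots$ coincides term by term with $Q(x^2)(A_{123}-x-x^2)$, while $A_{123}\neq 0$. So the generating-function line of the statement is misstated, and your plan proves the corrected identity rather than the stated one; your write-up should say this explicitly instead of claiming the stated identity follows. (The paper's own proof stops at the recurrence and never performs this translation, which is presumably how the error in the displayed equation went unnoticed.)
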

\begin{proof}
We focus on the $123$-avoiding fixed-point-free permutations that contain a \sif box and aim at deriving a formula for $f_n^{(0)}(123) - a_n(123)$. By Lemma~\ref{lem:2box}, we already know how to count the elements of $\mathcal{B}_n^{(2)}(123)$. Suppose $\sigma\in \mathcal{B}_n^{(m)}(123)$ for some $m>2$. In this case, there is only one \sif box of size $m$, and it must contain an ascent.\footnote{Since the permutation $m(m-1)\cdots 1$ is guaranteed to have either a fixed point or a $21$ \sif box.} Therefore, $\sigma$ must be of the form $\sigma = \sigma_1\ominus \beta\ominus \sigma_2$, where $\beta\in \Sif_m(123)$ and $\sigma_1,\sigma_2$ are $123$-avoiders with $\abs{\sigma_1} = \abs{\sigma_2}$. It follows that $n-m = 2k$ for some $k\in\{1,\dots,\floor{\frac{n-3}{2}}\}$. Since $\sigma_1,\sigma_2\in\mathcal{S}_k(123)$ can be arbitrary, there are $C_k^2$ such pairs, and we have
\[ b_n^{(m)}(123) = b_n^{(n-2k)}(123) =  C_k^2\cdot a_{n-2k}(123) \;\text{ for } n-2k \ge 3. \]
In conclusion,
\begin{align*} 
  f_n^{(0)}(123) - a_n(123) &= b_n^{(2)}(123) + \sum_{k=1}^{\floor{\frac{n-3}{2}}} b_n^{(n-2k)}(123) \\
  &= f_{n-1}^{(1)}(123) - f_{n-2}^{(2)}(123) + \sum_{k=1}^{\floor{\frac{n-3}{2}}}C_k^2\, a_{n-2k}(123),
\end{align*}
which leads to the claimed formula for $a_n(123)$.
\end{proof}

The function $Q(x)$ is the generating function of the sequence of squares of Catalan numbers, which can be found in \oeis{A001246}.

\subsection{Enumeration of $\Sif_n(231)\sim \Sif_n(312)$}

Let $\mathcal{T}_n(231)$ be the set of all permutations in $\mathcal{S}_n(231)$ that are either \sif or sum decomposable into \sif permutations of size greater than or equal to 3. We have $\mathcal{T}_n(231)=\Sif_n(231)$ for $n\le 5$, and $\mathcal{T}_6(231)$ consists of $\Sif_6(231)$ together with $312645=312\oplus 312$. 

By definition, $t_n(231)=\abs{\mathcal{T}_n(231)}$ is the {\sc invert} transform of the sequence $(a_n)_{n\in\mathbb{N}}$ given by $a_1=a_2=0$ and $a_n = \abs{\Sif_n(231)}$ for $n\ge 3$. Thus their generating functions satisfy
\[ 1 + T_{231}(x) = \frac{1}{1 - (A_{231}(x) - x - x^2)},  \] 
where $A_{231}(x)=\sum\limits_{n=1}^\infty\,\abs{\Sif_n(231)} x^n$, and so
\begin{equation}\label{eq:S(x)T(x)connection}
 A_{231}(x) = 1+ x + x^2 - \frac{1}{1 + T_{231}(x)}.
\end{equation}

\begin{prop} \label{prop:tn_seq}
Let $n\ge 3$. If $f^{(k)}_n(231) = \abs{\{\sigma\in\mathcal{S}_n(231): \mathrm{fp}(\sigma)=k\}}$, then
\[ t_n(231) = \sum_{k=0}^{\floor{\frac{n}{2}}} (-1)^{k} f^{(k)}_{n-k}(231). \]
\end{prop}
\begin{proof}
For $n\ge 3$, let $\mathcal{A}_n(k, j)$ be the set of permutations in $\mathcal{S}_n(231)$ having exactly $k$ fixed points and $j$ \sif boxes of size 2. Every permutation in $\mathcal{A}_n(k, j)$ can be uniquely constructed from an element $\sigma\in \mathcal{A}_{n-j}(k+j, 0)$ by keeping $k$ of the fixed points of $\sigma$ and inflating each of the remaining $j$ points with $21$ boxes. In other words, we have
\[ \abs{\mathcal{A}_n(k,j)} = \binom{k+j}{k} \abs{\mathcal{A}_{n-j}(k+j,0)}. \]
This implies
\begin{align*}
  f^{(k)}_{n-k}(231) &= \sum_{j=0}^{\floor{\frac{n-2k}{2}}} \abs{\mathcal{A}_{n-k}(k,j)} \\
  &= \sum_{j}^{\floor{\frac{n-2k}{2}}} \binom{k+j}{k} \abs{\mathcal{A}_{n-k-j}(k+j,0)} = \sum_{\ell=k}^{\floor{\frac{n}{2}}} \binom{\ell}{k} \abs{\mathcal{A}_{n-\ell}(\ell,0)}.
\end{align*}
Therefore,
\begin{align*}
  \sum_{k=0}^{\floor{\frac{n}{2}}} (-1)^k f^{(k)}_{n-k}(231) 
  &= \sum_{k=0}^{\floor{\frac{n}{2}}} \sum_{\ell=k}^{\floor{\frac{n}{2}}} (-1)^k \binom{\ell}{k} \abs{\mathcal{A}_{n-\ell}(\ell,0)} \\
  &= \sum_{\ell=0}^{\floor{\frac{n}{2}}}\, \abs{\mathcal{A}_{n-\ell}(\ell,0)} \sum_{k=0}^{\ell} (-1)^k\binom{\ell}{k} = \abs{\mathcal{A}_{n}(0,0)},
\end{align*}
which proves the proposition since $\mathcal{T}_n(231) = \mathcal{A}_n(0,0)$.
\end{proof}

\begin{thm}
The function $A_{231}(x)=\sum\limits_{n=1}^\infty\,\abs{\Sif_n(231)} x^n$ is given by the continued fraction
\begin{equation}\label{eqn:SIF_contfrac}
	A_{231}(x) = \dfrac{x}{1+C_1x^2(x+1)-\dfrac{x}{1+C_2x^3(x+1)-\dfrac{x}{1+C_3x^4(x+1)-\dfrac{x}{\ddots}}}}.
\end{equation}
\end{thm}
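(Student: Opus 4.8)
The plan is to reduce the claim to a single self-similar recursion among a whole family of generating functions indexed by ``superdiagonal distance,'' which then unrolls directly into the continued fraction \eqref{eqn:SIF_contfrac}. For $k\ge 0$ and a $231$-avoider $\delta$, set $e_k(\delta)=\#\{j:\delta(j)=j+k\}$ (so $e_0=\mathrm{fp}$) and
\[
E^{(k)}(x,q)=\sum_{\delta}x^{|\delta|}q^{e_k(\delta)},
\]
the sum being over all $231$-avoiding permutations of every size (including the empty one). The engine of the proof is the recursion
\[
E^{(k)}(x,-x)=\cfrac{1}{1+C_kx^{k+1}(x+1)-x\,E^{(k+1)}(x,-x)}\qquad(k\ge 0).
\]
Granting this, the case $k=0$ reads $E^{(0)}(x,-x)=1/(1+x+x^2-x\,E^{(1)}(x,-x))$, since $C_0=1$. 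As $E^{(0)}(x,q)$ is exactly the fixed-point generating function of $231$-avoiders, Proposition~\ref{prop:tn_seq} identifies $E^{(0)}(x,-x)$ with $1+T_{231}(x)$, and then \eqref{eq:S(x)T(x)connection} forces $A_{231}(x)=x\,E^{(1)}(x,-x)$. Feeding the recursion for $k=1,2,3,\dots$ into this identity unrolls precisely into \eqref{eqn:SIF_contfrac}.

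The recursion rests on two structural facts about $231$-avoiders, and this is where the ordered-forest picture enters. First, each $e_k$ is additive over the $\oplus$-decomposition into $\oplus$-indecomposable blocks, because a point with $\delta(j)=j+k$ can never bridge two blocks; hence $E^{(k)}=1/(1-I^{(k)})$, where $I^{(k)}$ is the same series restricted to $\oplus$-indecomposable permutations. Second, a $231$-avoider is $\oplus$-indecomposable exactly when its first entry is its maximum, and deleting that entry is a bijection onto arbitrary $231$-avoiders $\delta'$ of one smaller size. Under this prepend-the-maximum map the superdiagonal index shifts up by one:
\[
e_k(\beta)=[\,|\delta'|=k\,]+e_{k+1}(\delta').
\]
The crucial simplification is that a permutation of size $k$ has no point with $\delta'(j)=j+k+1$, so the boundary term contributes the plain count $C_k$ of $231$-avoiders of size $k$. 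This gives $I^{(k)}(x,q)=x\,E^{(k+1)}(x,q)+(q-1)C_kx^{k+1}$, and the substitution $q=-x$ (whence $q-1=-(x+1)$) produces the displayed recursion.

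Finally I would verify that the infinite continued fraction is a well-defined element of $\mathbb{Z}[[x]]$: since level $k$ enters only through $C_kx^{k+1}(x+1)$ and the leading factor $x$ in front of $E^{(k+1)}$, truncating the recursion at depth $k$ already agrees with $A_{231}(x)$ through order $x^{k+1}$, so the convergents stabilize coefficientwise. I expect the main obstacle to be the second structural fact — pinning down the action of the prepend-the-maximum map on $e_k$, including the boundary term $[\,|\delta'|=k\,]$ and the resulting Catalan constant; once the transformation $e_k(\beta)=[\,|\delta'|=k\,]+e_{k+1}(\delta')$ is established, everything downstream is formal. A secondary point to get right is the $k=0$ bookkeeping, where the recursion must be reconciled with the already-derived relation \eqref{eq:S(x)T(x)connection} rather than re-proving it.
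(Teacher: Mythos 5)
Your proof is correct, and it takes a genuinely different route from the paper's. The paper's proof of \eqref{eqn:SIF_contfrac} is short: it quotes Elizalde's continued fraction \eqref{eqn:FIX_contfrac} for $F_{231}(t,x)$ from \cite[Thm.~3.7]{Eli04}, sets $t=-x$ via Proposition~\ref{prop:tn_seq}, and finishes with \eqref{eq:S(x)T(x)connection} --- exactly the two reductions you also invoke at level $k=0$. What you do differently is prove the continued-fraction ingredient from scratch: your recursion $E^{(k)}(x,q)=1/\bigl(1-(q-1)C_kx^{k+1}-xE^{(k+1)}(x,q)\bigr)$, obtained from additivity of $e_k$ over $\oplus$-blocks together with the prepend-the-maximum bijection and its shift law $e_k(\beta)=[\,|\delta'|=k\,]+e_{k+1}(\delta')$, unrolls (before the substitution $q=-x$) into precisely \eqref{eqn:FIX_contfrac} with $t=q$, so your argument subsumes the cited result rather than assuming it. I checked the structural facts you flag as the main risk and they all hold: a $231$-avoider is $\oplus$-indecomposable iff its first entry is $n$ (an earlier entry exceeding a later one, with $n$ in between, would form a $231$); the shift identity is immediate since prepending $n$ moves positions up by one while keeping values, with the $j=1$ point contributing exactly when $|\delta'|=k$; and $e_{k+1}$ vanishes identically on permutations of size $k$, which is what makes the boundary term the plain Catalan weight $C_kx^{k+1}(q-1)$. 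Your construction is close in spirit to the paper's own alternative derivation via ordered forests (presented after the theorem), where your superdiagonal index $k$ plays the role of the root depth $p$ and the recursion for $F^{(p)}_{231}(t,s;x)$ has the same shape; but your version is leaner: one statistic per level, no sign-reversing involution on marked forests, with all cancellation delegated to the substitution $q=-x$ and the inclusion--exclusion already packaged in Proposition~\ref{prop:tn_seq}. What the paper's main proof buys is brevity by citing a known theorem; what yours buys is a self-contained combinatorial proof of \eqref{eqn:FIX_contfrac} and \eqref{eqn:SIF_contfrac} simultaneously, at the cost of reproving (a disguised form of) Elizalde's result.
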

\begin{proof}
If we let
\[ F_{231}(t,x) = \sum_{n\ge 0}\; \sum_{\pi\in\mathcal{S}_n(231)} t^{\mathrm{fp}(\pi)} x^n, \]
we can deduce from Elizalde's results \cite[Thm.~3.7]{Eli04} the following:
\begin{equation}\label{eqn:FIX_contfrac}
	F_{231}(t,x) = \dfrac{1}{1-(t-1)C_0 x-\dfrac{x}{1-(t-1)C_1x^2-\dfrac{x}{1-(t-1)C_2x^3-\dfrac{x}{\ddots}}}}.
\end{equation}
Moreover, by Proposition~\ref{prop:tn_seq}, we have
\[ 1+ T_{231}(x) = F_{231}(-x,x). \]
The claimed continued fraction for $A_{231}(x)$ then follows from \eqref{eq:S(x)T(x)connection}.
\end{proof}

\medskip
Suppose $\sigma\in\mathcal{S}_n(231)$ has a \sif box of size $3\le m<n$ that stabilizes the interval $[i,j]$. Since a \sif box of size greater than 2 must contain an ascent, there cannot be elements to the right of $\sigma(j)$ that are smaller than $i$. Thus, $\sigma$ stabilizes the interval $[j+1,n]$, and so it must also stabilize the interval $[1,i-1]$. In other words, $\sigma$ admits a sum decomposition of the form $\sigma = \sigma_1\oplus \beta\oplus \sigma_2$, where $\beta\in \Sif_m(231)$ and $\sigma_1,\sigma_2$ are $231$-avoiders, possibly empty.

As a consequence, the elements of $\Ind_n(231)\setminus\Sif_n(231)$ have no \sif box of size greater than 2. Moreover, they have to start with $n$ and are in one-to-one correspondence with ordered rooted trees with $n$ vertices, see \cite[Sec.~3]{BKP18}. We will use this correspondence to provide an alternative approach to derive \eqref{eqn:SIF_contfrac} and \eqref{eqn:FIX_contfrac}.

In fact, using the left-to-right maxima as roots, every element $\sigma\in \mathcal{S}_n(231)$ corresponds to a forest $T_\sigma$ of ordered trees constructed as follows: we declare that $\sigma(i)$ is the parent of $\sigma(j)$ if and only if it is a minimal inversion ($i<j$, $\sigma(i)>\sigma(j)$, and $j-i$ is minimal over all $i$). Then we order siblings in increasing order of labels from left to right. This can be visualized using the plot of the permutation or by means of an arc diagram, see the example in Figure~\ref{fig:231forest}.

\begin{figure}[h!]\centering
\begin{tikzpicture}[scale=0.4]
\begin{scope}
\draw[thick,cyan!40] (2,1)--(1,3)--(3,2);
\draw[thick,cyan!40] (6,5)--(5,9)--(7,8)--(9,6);
\plotpermutation{3,1,2,4,9,5,8,7,6}
\end{scope}
\begin{scope}[xshift=330, yshift=100]
\draw[cyan,thick] (0,0) parabola bend (0.75,0.3) (1.5,0);
\draw[cyan,thick] (0,0) parabola bend (1.5,0.6) (3,0);
\draw[cyan,thick] (6,0) parabola bend (6.75,0.3) (7.5,0);
\draw[cyan,thick] (6,0) parabola bend (7.5,0.6) (9,0) parabola bend (9.75,0.3) (10.5,0) parabola bend (11.25,0.3) (12,0);
\foreach \i/\v in {0/3,1/1,2/2,3/4,4/9,5/5,6/8,7/7,8/6}{
\draw[fill] (1.5*\i,0) circle(0.1);
\node[below=2pt] at (1.5*\i,0){\small $\v$};
}
\end{scope}
\end{tikzpicture}
\caption{Ordered forest corresponding to $\sigma=312495876$.}
\label{fig:231forest}
\end{figure}

For a vertex $v$ of $T_\sigma$, we let $\mathrm{descd}(v)$ be the number of descendants of $v$ and $\mathrm{depth}(v)$ be the number of ancestors, or depth of $v$. By construction,
\begin{align*}
 \mathrm{descd}(v) &= \#\{j\in[i+1,n]:\sigma(i)>\sigma(j)\}, \text{ and} \\
 \mathrm{depth}(v) &= \#\{j\in[i-1]:\sigma(j)>\sigma(i)\}. 
\end{align*}

Let $v_k$ be the vertex labeled $\sigma(k)$. Since there are $\sigma(k)-1$ entries smaller than $\sigma(k)$ and $k-1-\mathrm{depth}(v)$ of them appear at positions earlier than $k$, we must have $\sigma(k)-k+\mathrm{depth}(v)$ of them appearing after position $k$. Since this is the number of descendants of $v$, we get
\begin{equation*}
	\sigma(k)-k=\mathrm{descd}(v)-\mathrm{depth}(v).
\end{equation*} 
This formula gives us a way to identify fixed points $\sigma(i)=i$ and adjacent transpositions $\sigma(j)=j+1$, $\sigma(j+1)=j$ directly from the ordered forest.

We consider forests where ``depth" in the forest starts from roots of depth $p$ and define 
\begin{equation*}
F^{(p)}_{231}(t,s;x)=\sum_{n\ge 0}\sum_{T_n} t^{\mathrm{fp}(T_n)}s^{\mathrm{adt}(T_n)}x^n,
\end{equation*}
where the second sum is over all ordered forests with $n$ vertices and root depth $p$,
\begin{align*}
\mathrm{fp}(T) &:=\{v\in V(T):\mathrm{descd}(v)=\mathrm{depth}(v)\}\text{,~and~}\\ 
\mathrm{adt}(T)&:=\{(v,w)\in V(T):\mathrm{descd}(w)=\mathrm{depth}(v),\text{~and~}w\text{~is the only child of~}v\}.
\end{align*}

To enumerate this we define a larger set of ordered forests where the vertices in $\mathrm{fp}(T_n)$ can be marked in three ways, say \textit{good}, \textit{negative}, and \textit{positive} (weighted $tx$, $-x$, and $+x$), and the pairs in $\mathrm{adt}(T_n)$ can also be marked with weights $sx^2$, $-x^2$, and $+x^2$. All other vertices are weighted $+x$. It is then straightforward to create a sign-reversing involution by reversing the marking on the first positive or negative fixed point or adjacent pair, cancelling everything except the forests with vertices weighted $t$ and adjacent pairs weighted $s$. 

For any fixed point $v$ or adjacent pair $(v,w)$ in our forest, there cannot be another fixed point $v$ or adjacent pair $(v,w)$ as its descendant (since the depth increases and the number of descendants decreases). So, the children of such a point or pair make an ordered forest of size $\mathrm{depth}(v)$, of which there are $C_{\mathrm{depth}(v)}$. Then, $F^{(p)}_{231}(t,s;x)$ enumerates a sequence of trees, each of which is a good or negative fixed point or adjacent pair (weighted $tx$ or $-x$, or $sx^2$ or $-x^2$) followed by its children ($C_p$ possible ordered forests weighted $x^p$) or any kind of positive vertex weighted $x$ and its children (enumerated by $F^{(p+1)}_{231}(t,s;x)$). Thus, we have
\[ F^{(p)}_{231}(t,s;x)=\frac{1}{1-\big(x(t-1)x^pC_p + x^2(s-1)x^pC_p + xF^{(p+1)}_{231}(t,s;x)\big)}, \]
and so
\begin{equation*}
	F^{(0)}_{231}(t,s;x)=\frac{1}{1-x(t-1)C_0-x^2(s-1)C_0-\dfrac{x}{1-x^2(t-1)C_1-x^3(s-1)C_1-\dfrac{x}{\ddots}}}.
\end{equation*}
Now, by the above bijection, we have
\begin{equation*}
F^{(0)}_{231}(t,s;x)=\sum_{n\ge 0}\sum_{\;\pi\in S_n(231)} t^{\mathrm{fp}(\pi)}s^{\mathrm{adt}(\pi)}x^n,
\end{equation*} 
where $\mathrm{adt}(\pi)$ is the number of adjacent transpositions of $\pi$ (SIF boxes of size 2). In particular, $F^{(0)}_{231}(t,1;x)=F_{231}(t,x)$, and we recover Equation~\eqref{eqn:FIX_contfrac}.

Moreover, since the only \sif boxes for a permutation in $\Ind_n(231)\setminus\Sif_n(231)$ are fixed points or adjacent transpositions, the elements of $\Sif_n(231)$ correspond to trees which contain no fixed points or adjacent transpositions, except possibly at the root. These are trees with a root vertex weighted $x$ together with a forest beneath it beginning at depth 1 which contains no fixed point or adjacent transpositions. That is, we have $A_{231}(x)=xF^{(1)}_{231}(0,0;x)$, which gives us Equation~\eqref{eqn:SIF_contfrac}.

\section{Avoiding a pair of patterns of size 3}
\label{sec:pair_of_patterns}

With the same argument as in Theorem~\ref{thm:321avoiders}, we have $\Sif_n(321,\sigma) = \Ind_n(321,\sigma)$ for every pattern $\sigma$. Moreover, $\Sif_n(123, 321)=\varnothing$ for $n>4$, and for $n\ge 2$ we have 
\[ \Sif_n(231, 321)=\{1\ominus\id_{n-1}\} \;\text{ and }\; \Sif_n(132, 321)=\{\id_{n-j}\ominus\id_j: 1\le j<n\}, \]
where $\id_\ell=1\cdots \ell$ denotes the identity permutation of size $\ell$.
Recall that for $n>2$, every \sif permutation must contain a $231$ or a $312$ pattern. Therefore, $\Sif_n(231, 312)=\varnothing$ for $n>2$. The remaining pairs can be grouped in 4 Wilf equivalence classes (see Table~\ref{tab:PairOfPatterns}) that we proceed to discuss below.

\begin{table}[ht]
\small
\begin{tabular}{|c|l|c|c|} \hline
\R  $(\sigma_1,\sigma_2)\in \mathcal{S}_3\times\mathcal{S}_3$ & \hspace{40pt} $\abs{\Sif_n(\sigma_1,\sigma_2)}$, $n\ge 1$ & OEIS & Section \\[2pt] \hline
\rule[-2.8ex]{0ex}{7ex} \parbox{11ex}{(123, 132)\\[1pt] (123, 213)} &1, 1, 2, 3, 6, 9, 18, 27, 54, 81, 162, 243, \dots & A182522 & 3.2 \\ \hline
\rule[-2.8ex]{0ex}{7ex} \parbox{11ex}{(123, 231)\\[1pt] (123, 312)} &1, 1, 1, 1, 2, 3, 3, 5, 5, 7, 7, 10, 9, 13, 12, \dots & A363433 & 3.4 \\ \hline
\R (123, 321) &1, 1, 2, 3, 0, 0, 0, 0, 0, 0, 0, 0, \dots & finite & \\ \hline
\R (132, 213) &1, 1, 2, 5, 8, 17, 26, 53, 80, 161, 242, 485, \dots & A62318 & 3.3 \\ \hline
\rule[-5.7ex]{0ex}{12.8ex} \parbox{11ex}{(132, 231)\\[1pt] (132, 312)\\[1pt] (213, 231)\\[1pt] (213, 312)} &1, 1, 1, 2, 4, 8, 16, 32, 64, 128, 256, 512, \dots & A011782 & 3.1 \\ \hline
\rule[-2.8ex]{0ex}{7ex} \parbox{11ex}{(132, 321)\\[1pt] (213, 321)} &1, 1, 2, 3, 4, 5, 6, 7, 8, 9, 10, 11, \dots & A028310 & \\ \hline
\R (231, 312) & 1, 1, 0, 0, 0, 0, 0, 0, 0, 0, 0, 0, \dots &  finite & \\ \hline
\rule[-2.8ex]{0ex}{7ex} \parbox{11ex}{(231, 321)\\[1pt] (312, 321)} & 1, 1, 1, 1, 1, 1, 1, 1, 1, 1, 1, 1, \dots &  A000012 & \\ \hline
\end{tabular}
\bigskip
\caption{$(\sigma_1,\sigma_2)$-avoiding \sif permutations.}
\label{tab:PairOfPatterns}
\end{table}

\subsection{Enumeration of $\Sif_n(132,231)\sim \Sif_n(213,312)$}
We begin by observing that every $\sigma\in\Sif_n(132,231)$ must have $\sigma(1)=n$. Otherwise, $\sigma$ would be either decomposable or it would contain a $132$ or a $231$ pattern. With a similar argument, one can see that the plot of $\sigma$ must have a $\bigvee$ shape, i.e., it consists of a decreasing sequence (starting with $n$) followed by an increasing sequence. For example, for $n=6$ we have:
\begin{gather*}
654123,\; 654213,\; 651234,\; 652134, \\
641235,\; 642135,\; 612345,\; 631245.
\end{gather*}
Also note that for every $\sigma\in\Sif_n(132,231)$, either $\sigma(2) = n-1$ or $\sigma(n) = n-1$.
\begin{thm}
For $n\ge 2$, we have $\abs{\Sif_n(132,231)} = 2^{n-2}$.
\end{thm}
\begin{proof}
For $n\ge 3$, let $\mathcal{A}_n^{(k)} = \{\sigma\in\Sif_n(132,231): \sigma(k)=n-1\}$. Then,
\[ \Sif_n(132,231) = \mathcal{A}_n^{(2)}\cup \mathcal{A}_n^{(n)}. \]
For $\sigma\in \mathcal{A}_n^{(n)}$, let $\hat \sigma$ be the permutation defined by $\hat\sigma(1)=n-1$ and $\hat\sigma(j)=\sigma(j)$ for $j\in\{2,\dots,n-1\}$. The map $\sigma\mapsto \hat\sigma$ is a bijection from $\mathcal{A}_n^{(n)}$ to $\Sif_{n-1}(132,231)$.

For $\sigma\in \mathcal{A}_n^{(2)}$ we let $\sigma'$ be the permutation obtained by removing $n$ from $\sigma$. That is, $\sigma'(j)=\sigma(j+1)$ for $j\in\{1,\dots,n-1\}$. Clearly, $\sigma'$ begins with $n-1$ and its plot also has a $\bigvee$ shape. Thus, $\sigma'$ avoids the patterns $132$ and $231$, and it cannot have a \sif box of size larger than 2 and smaller than $n-1$. Also, since $\sigma$ has no fixed points, there is no $j$ such that $\sigma'(j-1)=j$, which means that $\sigma'$ has no \sif box of size 2. In other words, $\sigma'$ is \sif if and only if it is fixed point free. In addition, $\sigma'$ cannot have a fixed point $\sigma'(j)=j$ on the right side of the $\bigvee$ (since the $n-1-j$ entries past $j$ will have to be greater than $j$, but $n-1$ is already at position 1). In other words, $\sigma'$ has at most one fixed point. 

With the above observations in mind, we let
\begin{equation*}
\phi(\sigma) = \begin{cases}
	\sigma' &\text{if } \mathrm{fp}(\sigma')=0, \\
	\tau_{j}\circ\sigma' &\text{if } \sigma'(j)=j,
\end{cases}
\end{equation*}
where $\tau_{j}$ denotes the transposition that swaps $j$ and $j+1$. By construction, $\phi$ maps $\mathcal{A}_n^{(2)}$ into $\Sif_{n-1}(132,231)$ (note that for $\sigma\in\mathcal{A}_n^{(2)}$, we must have $\sigma'(j)\not= j+1$ for every $j$). Moreover, for $\varrho\in\Sif_{n-1}(132,231)$, we have $\phi^{-1}(\varrho) = 1\ominus\varrho$ if $ \mathrm{fp}(1\ominus\varrho)=0$, and $\phi^{-1}(\varrho) = 1\ominus(\tau_j\circ\varrho)$ if $\varrho(j)=j+1$. Combining the above bijections for $\mathcal{A}_n^{(n)}$ and $\mathcal{A}_n^{(2)}$, we conclude
\[ \abs{\Sif_n(132,231)} = \abs{\mathcal{A}_n^{(2)}}+\abs{\mathcal{A}_n^{(n)}} = 2\,\abs{\Sif_{n-1}(132,231)}, \]
which leads to the claimed formula. 
\end{proof}

\subsection{Enumeration of $\Sif_n(123,132)\sim \Sif_n(123,213)$}
Since the permutations $123$ and $132$ are both not SIF, we have $\Sif_n(123,132)= \Sif_n$ for $n\in\{1,2,3\}$. 

For $n\ge 4$ and every $\sigma\in \Sif_n(123,132)$, the pattern avoidance implies $\sigma(1)\in\{n-1,n\}$ and $\sigma^{-1}(1)\in\{n-1,n\}$. On the other hand, the \sif property implies that we cannot have $\sigma(1)=n$ and $\sigma(n)=1$. Therefore, the set $\Sif_n(123,132)$ is the union of the disjoint sets
\begin{align*}
 \mathcal{A}_{1,n} &= \{\sigma\in \Sif_n(123,132): \sigma(1)=n \text{ and } \sigma(n-1)=1\}, \\
 \mathcal{A}_{2,n} &= \{\sigma\in \Sif_n(123,132): \sigma(1)=n-1 \text{ and } \sigma(n-1)=1\}, \\
 \mathcal{A}_{3,n} &= \{\sigma\in \Sif_n(123,132): \sigma(1)=n-1 \text{ and } \sigma(n)=1\}.
\end{align*} 
Observe that the map $\sigma\mapsto \sigma^{-1}$ gives a bijection between $\mathcal{A}_{1,n}$ and $\mathcal{A}_{3,n}$. Moreover, if $\tau_{n-1}$ is the transposition that swaps $n-1$ and $n$, then the map $\sigma\mapsto \sigma\circ\tau_{n-1}$ gives a bijection between $\mathcal{A}_{2,n}$ and $\mathcal{A}_{3,n}$. Thus, for $n\ge 4$, we have $\abs{\mathcal{A}_{1,n}} = \abs{\mathcal{A}_{2,n}} = \abs{\mathcal{A}_{3,n}}$.

\begin{thm}
The sequence $a_n = \abs{\Sif_n(123,132)}$ satisfies the recurrence relation
\begin{equation*}  
a_1=a_2=1, \; a_3=2, \text{ and } a_n=3a_{n-2} \;\text{ for } n\ge 4,
\end{equation*}
and its generating function satisfies $A_{123,132}(x) = \dfrac{x+x^2-x^3}{1-3x^2}$. 
\end{thm}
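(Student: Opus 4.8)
The plan is to leverage the three-way partition $\Sif_n(123,132)=\mathcal{A}_{1,n}\cup\mathcal{A}_{2,n}\cup\mathcal{A}_{3,n}$ already established above, together with the equalities $\abs{\mathcal{A}_{1,n}}=\abs{\mathcal{A}_{2,n}}=\abs{\mathcal{A}_{3,n}}$ for $n\ge 4$. These give $a_n=3\abs{\mathcal{A}_{3,n}}$, so the entire recurrence reduces to proving that $\abs{\mathcal{A}_{3,n}}=a_{n-2}$; the base values $a_1=a_2=1$ and $a_3=2$ follow from $\Sif_n(123,132)=\Sif_n$ for $n\le 3$. Thus I would devote the proof to constructing an explicit bijection $\Phi\colon\mathcal{A}_{3,n}\to\Sif_{n-2}(123,132)$.

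To define $\Phi$, recall that every $\sigma\in\mathcal{A}_{3,n}$ has $\sigma(1)=n-1$ and $\sigma(n)=1$, and that $\sigma(1)\in\{n-1,n\}$ for any $123,132$-avoider forces the value $n$ to occupy a single position $p\in\{2,\dots,n-1\}$, with all other interior entries lying in $\{2,\dots,n-2\}$. I would let $\Phi(\sigma)$ be the permutation obtained by deleting the first and last entries of $\sigma$ and standardizing the remaining word (so the entry $n$ becomes the maximum $n-2$ and each interior value $v$ becomes $v-1$). Its candidate inverse $\Psi$ takes $\tau\in\mathcal{S}_{n-2}$, inflates the maximum of $\tau$ back to $n$ while shifting every other value up by one, and then prepends $n-1$ and appends $1$. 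Deletion automatically preserves avoidance of $123$ and $132$; for $\Psi$ one checks that prepending $n-1$ and appending $1$ create no new occurrence, since $n$ is the only value exceeding $n-1$ and $1$ is minimal. Hence $\Phi$ and $\Psi$ are mutually inverse bijections between $\{\sigma\in\mathcal{S}_n(123,132):\sigma(1)=n-1,\ \sigma(n)=1\}$ and $\mathcal{S}_{n-2}(123,132)$.

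The crux is then the claim that $\Phi$ and $\Psi$ transport the \sif property, i.e.\ that $\sigma$ is \sif if and only if $\Phi(\sigma)$ is. I would argue this through the correspondence of stabilized intervals. No proper stabilized interval of $\sigma$ can use position $1$ (it would have to contain the value $n-1$, hence extend past the entry $n$, which then escapes the interval) nor position $n$ (it would contain the value $1$, forcing it to start at position $1$); so any such interval lies strictly inside the interior block and, after the $\pm1$ shift, becomes a proper stabilized interval of $\Phi(\sigma)$, and conversely for intervals of $\Phi(\sigma)$ not meeting the maximum. The one delicate case is a stabilized interval $[c,d]$ of $\tau=\Phi(\sigma)$ that contains the maximum of $\tau$: then $d$ must equal $n-2$, and the complementary initial segment $[1,c-1]$ is itself stabilized and avoids the maximum, so it transports to a proper stabilized interval of $\sigma$, giving the contradiction. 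This equivalence shows that $\Phi$ restricts to a bijection $\mathcal{A}_{3,n}\to\Sif_{n-2}(123,132)$, and I expect this stabilized-interval bookkeeping---especially the maximum-containing case---to be the main obstacle.

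Finally, $\abs{\mathcal{A}_{3,n}}=a_{n-2}$ yields $a_n=3a_{n-2}$ for $n\ge 4$, and together with $a_1=a_2=1$ and $a_3=2$ this pins down the sequence. For the generating function I would sum the recurrence over $n\ge 4$, obtaining $A_{123,132}(x)-x-x^2-2x^3=3x^2\big(A_{123,132}(x)-x\big)$, and solve to get $A_{123,132}(x)=\dfrac{x+x^2-x^3}{1-3x^2}$.
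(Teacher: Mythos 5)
Your proposal is correct and follows essentially the same route as the paper: the same three-way partition with $\abs{\mathcal{A}_{1,n}}=\abs{\mathcal{A}_{2,n}}=\abs{\mathcal{A}_{3,n}}$, a two-point deletion bijection onto $\Sif_{n-2}(123,132)$ yielding $a_n=3a_{n-2}$, and the same generating-function computation. The only difference is cosmetic: you biject $\mathcal{A}_{3,n}$ by deleting the first and last positions and standardizing (so the interior value $n$ becomes the new maximum), whereas the paper bijects $\mathcal{A}_{1,n}$ by deleting the values $n$ and $1$ and reducing; the two maps are conjugate under the inverse symmetry, and your explicit stabilized-interval bookkeeping (including the case of an interval containing the maximum) simply spells out details the paper's proof leaves terse.
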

\begin{proof}
For $n\ge 4$ and $\sigma\in \mathcal{A}_{1,n}$, we let $\sigma''$ be the permutation obtained from $\sigma$ by removing $n$ and $1$, and then reducing all of the remaining entries by 1. For example, for $n=6$ the map $\sigma\mapsto \sigma''$ gives:
\begin{equation*}
654213 \mapsto 4312, \quad
645213 \mapsto 3412, \quad
645312 \mapsto 3421.
\end{equation*}
Observe that $\sigma\in \mathcal{A}_{1,n}$ implies $\sigma(n)\not=n-1$ (otherwise $\sigma$ would have the stabilized interval $[2, n-2]$). Moreover, removing the first element and the 1 value from $\sigma$ (i.e.\ shifting its plot left and down by one) guarantees that $\sigma''$ will not have any new stabilized sub-intervals on the interval $[2,n-2]$. Therefore, $\sigma''$ is \sif and the map $\sigma\mapsto \sigma'': \mathcal{A}_{1,n}\to \Sif_{n-2}(123,132)$ is clearly bijective. Finally, since $\Sif_{n}(123,132) = \mathcal{A}_{1,n} \cup \mathcal{A}_{2,n}\cup \mathcal{A}_{3,n}$ is a disjoint union, and since $a_{n-2} = \abs{\mathcal{A}_{1,n}} = \abs{\mathcal{A}_{2,n}} = \abs{\mathcal{A}_{3,n}}$, we conclude 
\[ a_n = \abs{\mathcal{A}_{1,n}} + \abs{\mathcal{A}_{2,n}} + \abs{\mathcal{A}_{3,n}} = 3a_{n-2} \;\text{ for } n\ge 4. \]
As a consequence, $A_{123,132}(x) = x+x^2+2x^3 + 3x^2(A_{123,132}(x)-x)$.
\end{proof}

\subsection{Enumeration of $\Sif_n(132,213)$}
Every permutation $\sigma\in\mathcal{S}_n(132,213)$ must be of the form $\sigma = \id_{\ell_1}\ominus\cdots\ominus\id_{\ell_k}$, where $\id_\ell=1\cdots \ell$ and $\ell_1+\dots+\ell_k=n$. Clearly, the permutations $\id_n$ (case $k=1$) and its reverse $r(\id_n)$ (case $k=n$) are not SIF, so we can assume $1<k<n$. Moreover, if $i_1<\cdots<i_{k-1}$ are the positions of the descents\footnote{A permutation $\sigma$ is said to have a descent at position $i$ if $\sigma(i)>\sigma(i+1)$.} of $\sigma$, then $\sigma$ is \sif if and only if $i_j + i_\ell\not =n$ for every $j\not=\ell$.

\begin{thm}
For $n\ge 2$, we have 
\[ \abs{\Sif_n(132,213)} = 
\begin{cases} 
3^{\frac{n-1}{2}}-1 &\text{if $n$ is odd}, \\
2\cdot 3^{(\frac{n}{2}-1)}-1&\text{if $n$ is even}.
\end{cases} \]
Its generating function satisfies $A_{132,213}(x) = \dfrac{x-2x^3+3x^4}{(1-x)(1-3x^2)}$. 
\end{thm}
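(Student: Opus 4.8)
The plan is to reduce the problem to a clean counting question about subsets, leaning on the structural description established just above the theorem. By that description, every $\sigma\in\mathcal{S}_n(132,213)$ is a layered permutation $\id_{\ell_1}\ominus\cdots\ominus\id_{\ell_k}$, so it is determined by its descent set $S=\{i_1,\dots,i_{k-1}\}\subseteq\{1,\dots,n-1\}$ (the partial sums $i_j=\ell_1+\cdots+\ell_j$), and the assignment $\sigma\mapsto S$ is a bijection onto all subsets of $\{1,\dots,n-1\}$. Under this bijection $\id_n$ corresponds to $S=\varnothing$, and the SIF criterion ``$i_j+i_\ell\neq n$ for all $j\neq\ell$'' (which for $n\ge 3$ also rules out $r(\id_n)$, whose descent set is $\{1,\dots,n-1\}$) becomes the statement that $\sigma$ is \sif if and only if $S\neq\varnothing$ and $S$ contains no two distinct elements summing to $n$. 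Thus
\[ \abs{\Sif_n(132,213)} = \#\{S\subseteq\{1,\dots,n-1\}: S\neq\varnothing,\ i+i'\neq n \text{ for all distinct } i,i'\in S\}. \]

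First I would count these subsets by partitioning the ground set $\{1,\dots,n-1\}$ according to the involution $i\mapsto n-i$. When $n=2m+1$ is odd, this splits $\{1,\dots,2m\}$ into the $m$ disjoint pairs $\{i,n-i\}$; a valid $S$ chooses from each pair at most one element, giving $3$ independent options per pair, hence $3^m$ subsets, and subtracting the empty set yields $3^{m}-1=3^{(n-1)/2}-1$. When $n=2m$ is even, the same involution fixes the midpoint $m=n/2$ and pairs up the remaining $2(m-1)$ elements into $m-1$ pairs; each pair again contributes $3$ options, while the fixed point $m$ is unconstrained (it can never pair with a \emph{distinct} element) and contributes a factor $2$, for $2\cdot 3^{m-1}$ subsets, i.e.\ $2\cdot 3^{(n/2)-1}-1$ after discarding $\varnothing$. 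These are exactly the two claimed closed forms. The small cases $n=1,2$ would be checked by hand: $a_1=1$ for the single point, and for $n=2$ the even formula gives $1$, matching $\Sif_2=\{21\}$.

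Finally I would assemble the generating function. The closed forms give the recurrence $a_n=3a_{n-2}+2$ for $n\ge 4$ with $a_1=a_2=1$ and $a_3=2$ (the value $a_1=1$ being the lone exception to the even/odd pattern, since the $n=1$ point lies outside the $n\ge 2$ formula). Translating this into generating-function form gives
\[ A_{132,213}(x)(1-3x^2)=x+x^2-x^3+\frac{2x^4}{1-x}, \]
and clearing the denominator $1-x$ collapses the numerator to $x-2x^3+3x^4$, yielding the stated $A_{132,213}(x)=\dfrac{x-2x^3+3x^4}{(1-x)(1-3x^2)}$; alternatively one can sum the even and odd geometric series directly.

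The argument is short because the preceding structural description does the real work; the only genuinely error-prone steps are the bookkeeping ones. The main point to handle carefully is the parity split in the pairing count—in particular, making sure the self-paired midpoint $m=n/2$ in the even case is treated as a free element (factor $2$, not $3$) and that the single empty-set overcount is removed exactly once—together with verifying that the $n=1$ term is inserted separately, since it does not obey the $n\ge 2$ recurrence.
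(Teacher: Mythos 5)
Your proposal is correct and follows essentially the same route as the paper: both reduce to counting nonempty descent sets $S\subseteq\{1,\dots,n-1\}$ with no two distinct elements summing to $n$, and both count these via the pairing $i\leftrightarrow n-i$ (the paper phrases this as choosing arcs in a nested arc diagram and sums binomials over the number of descents, whereas you multiply the three independent options per pair directly, which is a minor streamlining). Your explicit recurrence $a_n=3a_{n-2}+2$ and the resulting algebra correctly fill in what the paper dismisses as ``routine algebraic manipulations.''
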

\begin{proof}
As explained above, the elements of $\Sif_n(132,213)$ are determined by the positions of their descents. More precisely, if \[ \mathcal{B}_{n,k} = \big\{(i_1,\dots,i_{k-1})\in \mathbb{N}^{k-1}:  i_1<\cdots<i_{k-1}<n \text{ and } i_j+i_\ell \not=n \text{ for } j\not=\ell\big\}, \]
then $\abs{\Sif_n(132,213)} = \sum\limits_{k=2}^{n-1} \abs{\mathcal{B}_{n,k}}$. Suppose $n=2m+1$. Consider the nested arc diagram with $2m$ nodes, labeled $1$ through $2m$, and such that each node $j\le m$ is connected by a single arc to the node $2m+1-j$. Then, each element of $\mathcal{B}_{n,k}$ can be obtained by choosing $k-1$ arcs out of the $m$ arcs in the above arc diagram, and then for each arc, choosing one of the two nodes in that arc. Therefore $\abs{\mathcal{B}_{2m+1,k}} = \binom{m}{k-1} 2^{k-1}$, and we get
\[ \abs{\Sif_{2m+1}(132,213)} = \sum_{k=2}^{m+1} \binom{m}{k-1} 2^{k-1} = \sum_{k=1}^{m} \binom{m}{k} 2^{k} = 3^m-1.\]
If $n=2m$, we consider a similar arc diagram with $2m-1$ nodes, pairing each $j<m$ with the node $2m-j$, and leaving the node $m$ alone. There are $\binom{m-1}{k-2} 2^{k-2}$ elements of $\mathcal{B}_{2m,k}$ that include $m$ and 
$\binom{m-1}{k-1} 2^{k-1}$ elements that do not include $m$. Therefore,
\begin{align*}
 \abs{\Sif_{2m}(132,213)} &= \sum_{k=2}^{m+1} \binom{m-1}{k-2} 2^{k-2} + \sum_{k=2}^{m} \binom{m-1}{k-1} 2^{k-1} \\
 &= 3^{m-1} + 3^{m-1} - 1 = 2\cdot 3^{m-1} - 1.
\end{align*}
The generating function follows using routine algebraic manipulations. 
\end{proof}

\subsection{Enumeration of $\Sif_n(123,231)\sim \Sif_n(123,312)$}
As we did for $\Sif_n(123)$ and $\Sif_n(231)$, we will rely on the fixed points statistics provided in \cite{Eli04}. If we let
\[ F_{123,231}(t,x) = \sum_{n\ge 0}\; \sum_{\pi\in\mathcal{S}_n(123,231)} t^{\mathrm{fp}(\pi)} x^n, \]
then we have \cite[Cor.~4.13]{Eli04}: 
\begin{equation*}
 F_{123,231}(t,x) = \frac{1 + tx + (t^2-2)x^2 + (3+t-t^2)x^4 + 3x^5 - x^6 - 4x^7 - 2tx^8}{(1-x^2)^3(1-x^3)}.
\end{equation*}

\begin{thm}
For $n\ge 3$, we have
\begin{equation*}
  \abs{\Sif_n(123,231)} = f_n^{(0)}(123,231) - f_{n-1}^{(1)}(123,231) + f_{n-2}^{(2)}(123,231),
\end{equation*}
and its generating function $A_{123,231}(x)$ satisfies
\[ A_{123,231}(x) - x - x^2 = F_{123,231}(-x,x) - 1 = \frac{x^3(1+x-x^4)}{(1-x^2)^2(1-x^3)}. \]
\end{thm}
\begin{proof}
As in the proof of Theorem~\ref{thm:123avoiders}, we focus on counting fixed-point-free permutations that are not SIF. With a similar argument as for Lemma~\ref{lem:2box}, one can deduce that there are $f_{n-1}^{(1)}(123,231) - f_{n-2}^{(2)}(123,231)$ fixed-point-free permutations in $\mathcal{S}_n(123,231)$ having a smallest \sif box of size $2$. 

For $n=3$, the statement is clearly true since $f_3^{(0)}=1$ and $f_{2}^{(1)}=f_{1}^{(2)}=0$.  For $n>3$, a permutation $\sigma\in\mathcal{S}_n(123,231)$ cannot have a \sif box of size $m$ with $3\le m<n$. In fact, the avoidance of $231$ forces $\sigma$ to be decomposable, but the $123$-avoidance does not allow it. In conclusion, every fixed-point-free permutation in $\mathcal{S}_n(123,231)$ is either \sif or it has a \sif box of size 2. So, $f_n^{(0)}(123,231) = \abs{\Sif_n(123,231)} + f_{n-1}^{(1)}(123,231) - f_{n-2}^{(2)}(123,231)$.
\end{proof}

\begin{rem}
For $n\ge 3$, we have
\begin{equation*}
\abs{\Sif_n(123,231)} = 
\begin{cases}
\frac{(n+12)(n-2)}{24} &\text{if } n \equiv 0 \mod 6, \\
\frac{(n+5)(n-1)}{24} &\text{if } n \equiv 1 \mod 6, \\
\frac{(n+12)(n-2)}{24} &\text{if } n \equiv 2 \mod 6, \\
\frac{(n+3)(n+1)}{24} &\text{if } n \equiv 3 \mod 6, \\
\frac{(n+12)(n-2)}{24}-\frac13 &\text{if } n \equiv 4 \mod 6, \\
\frac{(n+3)(n+1)}{24} &\text{if } n \equiv 5 \mod 6.  
\end{cases}
\end{equation*}
\end{rem}

\renewcommand\thesection{\Alph{section}}
\setcounter{section}{0}

\section{Appendix}

Let $f^{(2)}_n(123) = \abs{\{\sigma\in\mathcal{S}_n(123): \mathrm{fp}(\sigma)=2\}}$. Using a bijection to Dyck paths and tracking several parameters at the Dyck paths level, Elizalde proved an elaborate formula for $f^{(2)}_n(123)$, see \cite[Theorem~3.3]{Eli04}. As an alternative approach, we consider the sets 
\[ \mathcal{S}_{n,d}^{(2)}(123)=\big\{\sigma\in \mathcal{S}_n(123): \exists\; i,j \text{ with } \sigma(i)=i,\: \sigma(j)=j,\text{ and } |i-j|=d\big\}. \] 
Clearly, if we let $f_{n,d}^{(2)}(123)=\abs{\mathcal{S}_{n,d}^{(2)}(123)}$, then
\begin{equation}\label{eq:twoFP}
  f^{(2)}_n(123) = \sum_{d\ge 1} f_{n,d}^{(2)}(123).
\end{equation}

\begin{lem}
For $n\ge 2$, we have $f_{n,d}^{(2)}(123)=0$ unless $1\le d\le \tfrac{n}{2}$.
\end{lem}
\begin{proof}
The argument is simple with a picture. The plot of any permutation in $\mathcal{S}_{n,d}^{(2)}(123)$ must be of the form
\begin{equation}\label{pic:twoFP}
\begin{tikzpicture}[scale=0.6,baseline=10ex]
\node[below=1] at (2,0) {\small $i$};
\node[below=1] at (4,0) {\small $i+d$};
\node at (1,5) {$j$ {\small dots}};
\node at (5.2,1) {$k$ {\small dots}};
\node[gray] at (3,5.2) {$\ddots$};
\node[gray] at (1,3.2) {$\ddots$};
\node[gray] at (5,3.2) {$\ddots$};
\node[gray] at (3,1.2) {$\ddots$};
\clip (0.1,0.1) rectangle (5.9,5.9);
\draw[gray,step=2] (0,0) grid (6,6);
\draw[mesh] (0,0) rectangle (2,2);
\draw[mesh] (2,2) rectangle (4,4);
\draw[mesh] (4,4) rectangle (6,6);
\foreach \x/\y in {2/2,4/4}{\draw[fill] (\x,\y) circle (0.12);}
\end{tikzpicture}
\end{equation}
with $j,k\ge 0$, where the dots on the upper left corner and lower right corner must avoid a $123$ pattern, and each symbol $\ddots$ stands for a decreasing sequence, possibly empty. Now, there are $d-1$ points in the middle row and $d-1$ points in the middle column, so the total number of points is $n = j + k + 2(d-1) + 2 = j+k+2d$. Therefore, $2d = n-j-k \le n$ and so $d\le \frac{n}{2}$.
\end{proof}

Let us discuss the extreme cases $d=1$ or $d=\floor{\tfrac{n}{2}}$.

If $d=1$, we must have $j=k$ in \eqref{pic:twoFP} and the permutation must be of even size. Thus,
\begin{equation*}
  f^{(2)}_{n,1}(123) =
  \begin{cases}
  C_{\frac{n-2}{2}}^2 &\text{if $n$ is even,} \\
  0  &\text{if $n$ is odd.}
  \end{cases} 
\end{equation*}

On the other hand, if $d=\floor{\tfrac{n}{2}}$, we have
\begin{equation*}
  f^{(2)}_{n,\floor{\tfrac{n}{2}}}(123) =
  \begin{cases}
   \dbinom{n-2}{\tfrac{n-2}{2}} &\text{if $n$ is even,} \\[2ex]
   (n-3)\dbinom{n-3}{\tfrac{n-1}{2}} &\text{if $n$ is odd.}
  \end{cases} 
\end{equation*}
Indeed, if $n$ is even, we have $d=\frac{n}{2}$ and \eqref{pic:twoFP} takes the form

\medskip
\begin{center}
\begin{tikzpicture}[scale=0.6]
\node[below=1] at (2,0) {\small $i$};
\node[below=1] at (4,0) {\small $i+\frac{n}{2}$};
\node[gray] at (3,5.2) {$\ddots$};
\node[gray] at (1,3.2) {$\ddots$};
\node[gray] at (5,3.2) {$\ddots$};
\node[gray] at (3,1.2) {$\ddots$};
\clip (0.1,0.1) rectangle (5.9,5.9);
\draw[gray,step=2] (0,0) grid (6,6);
\draw[mesh] (0,4) rectangle (2,6);
\draw[mesh] (0,0) rectangle (2,2);
\draw[mesh] (2,2) rectangle (4,4);
\draw[mesh] (4,4) rectangle (6,6);
\draw[mesh] (4,0) rectangle (6,2);
\foreach \x/\y in {2/2,4/4}{\draw[fill] (\x,\y) circle (0.12);}
\end{tikzpicture}
\end{center}
Now, there are $\binom{\frac{n-2}{2}}{i-1}$ ways to choose the elements to the left of $i$ and $\binom{\frac{n-2}{2}}{i-1}$ ways to choose the positions of the elements smaller than $i$. Thus,
\[ f^{(2)}_{n,\tfrac{n}{2}}(123) = \sum_{i=1}^{\frac{n}{2}} \binom{\frac{n-2}{2}}{i-1} \binom{\frac{n-2}{2}}{i-1} = \dbinom{n-2}{\tfrac{n-2}{2}}. \]

If $n$ is odd, then $d=\frac{n-1}{2}$ and in \eqref{pic:twoFP} we must have either $j=1$ and $k=0$, or $j=0$ and $k=1$. In both cases, we have $i\ge 2$, and a similar counting argument gives
\[ f^{(2)}_{n,\tfrac{n-1}{2}}(123) = (n-3)\sum_{i=2}^{\frac{n-1}{2}} \binom{\frac{n-3}{2}}{i-2} \binom{\frac{n-3}{2}}{i-1} = (n-3)\dbinom{n-3}{\tfrac{n-1}{2}}. \]

In general, we claim the following.
\begin{conjecture}
Write $n=2m+r$ with $r\in\{0,1\}$ and let $1\le d\le m$. Then, 
\begin{equation*}
 f_{n,d}^{(2)}(123) = \frac{\binom{n-d}{m}^2}{(n-d)^2} \Bigg[\frac{\binom{2d}{d}d^3(1-r)}{4d-2}
  + 2\!\sum_{i=1}^{\floor{\frac{m+1}{3}}}\frac{\binom{m}{i-r}\binom{m-d+r}{i}\binom{2d-2}{2i-1+d-r}}{\binom{m+i}{m}\binom{m-d+i}{m-d+r}}(d^2-(2i-r)^2)\Bigg].
\end{equation*}
\end{conjecture}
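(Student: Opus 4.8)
The plan is to convert the block decomposition established in the lemma into an exact count by summing over the two genuinely free parameters. Fix the position $i$ of the lower fixed point and the number $j$ of dots in the upper-left block of \eqref{pic:twoFP}. As computed in the proof of the lemma, all remaining block sizes are then forced: the left-middle block has $i-1-j$ dots, the right-middle block has $d-i+j$ dots, the lower-right block has $n-2d-j$ dots, and the top-middle and bottom-middle blocks absorb the leftover top and bottom values. The first task is to verify that, once the sets of positions and values assigned to each block are chosen, the four edge-middle blocks admit a \emph{unique} $123$-avoiding filling (they must be decreasing, by the same fixed-point arguments used in the lemma), whereas the two corner blocks are arbitrary $123$-avoiders constrained only by the cross-block conditions relative to the two fixed points. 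Carrying this out reduces $f^{(2)}_{n,d}(123)$ to a sum over $i$ (equivalently, over the block sizes) of products of binomial coefficients and ballot-type numbers.

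Next I would pin down the combinatorial meaning of the closed form, both as a guide and as a set of checks. The prefactor $\frac{\binom{n-d}{m}^2}{(n-d)^2}$ should be read as the product of two equal ballot numbers $\frac{1}{n-d}\binom{n-d}{m}$, reflecting two essentially independent decreasing structures (the left-to-right minima and their complement, in the merge characterization of $123$-avoiders) on the $n-d$ points lying outside the middle gap, while the bracket records the coupling through the middle region of width $d$. This reading already reproduces the extreme cases proved above: for even $n$ and $d=1$ the sum vanishes because $\binom{2d-2}{2i-1+d}=\binom{0}{2i}=0$ for $i\ge 1$, only the term $\frac{\binom{2d}{d}d^3}{4d-2}=1$ survives, and together with the prefactor it collapses to $C_{m-1}^2$, matching $f^{(2)}_{n,1}(123)$. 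The term $\frac{\binom{2d}{d}d^3(1-r)}{4d-2}$ appears only for even $n$ (i.e.\ $r=0$) and should count the symmetric, balanced configurations, the sum being the asymmetric correction; handling both parities through the single parameter $r\in\{0,1\}$ is a bookkeeping step I would organize by treating $j$ and $i-j$ symmetrically.

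The genuine obstacle is the final simplification: collapsing the raw multiple sum produced by the decomposition into the single closed-form sum of the conjecture, with summand $\frac{\binom{m}{i-r}\binom{m-d+r}{i}\binom{2d-2}{2i-1+d-r}}{\binom{m+i}{m}\binom{m-d+i}{m-d+r}}\,(d^2-(2i-r)^2)$. Two features signal that this is far from routine. First, the factor $d^2-(2i-r)^2=(d-2i+r)(d+2i-r)$ together with the unusual truncation at $i=\lfloor (m+1)/3\rfloor$ strongly suggests that the formula is the residue of a sign-reversing or telescoping cancellation, so that most terms of the naive sum annihilate each other. Second, the denominators $\binom{m+i}{m}$ and $\binom{m-d+i}{m-d+r}$ are precisely the normalizations that turn products of binomials into ballot numbers, hinting that the summand is a difference of consecutive terms of a hypergeometric sequence. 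Accordingly, I would attempt to prove the identity by creative telescoping (Zeilberger's algorithm) on the index $i$, and, should a clean certificate fail, by constructing an explicit sign-reversing involution on the underlying lattice-path model that cancels all but the claimed terms. Verification against the values in \eqref{eq:twoFP} and the special cases above would furnish the final consistency check. The reduction from the raw multiple sum to a hypergeometric single sum, and the telescoping that manufactures the $(d^2-(2i-r)^2)$ factor, is where I expect essentially all of the difficulty to concentrate.
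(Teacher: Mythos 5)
The statement you are trying to prove is, in the paper, explicitly a \emph{conjecture}: the authors give no proof of it at all. They prove only the two extreme cases ($d=1$ and $d=\floor{n/2}$) by direct counting from the block picture \eqref{pic:twoFP}, verify the general formula numerically (the Remark about the Sage computation), and leave the identity open. So there is no paper proof to match your attempt against; the only meaningful question is whether your proposal closes the gap the authors could not. It does not. What you have written is a strategy outline whose decisive step --- collapsing the raw sum coming from the block decomposition into the single closed-form sum with summand $\frac{\binom{m}{i-r}\binom{m-d+r}{i}\binom{2d-2}{2i-1+d-r}}{\binom{m+i}{m}\binom{m-d+i}{m-d+r}}\,(d^2-(2i-r)^2)$ --- is deferred to ``creative telescoping'' or a sign-reversing involution that you do not construct. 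You say yourself that this is where ``essentially all of the difficulty'' concentrates; until that step is executed, nothing is proven. Your consistency checks are correct (e.g., for $d=1$, $r=0$ the bracket reduces to $1$ and the prefactor $\binom{2m-1}{m}^2/(2m-1)^2$ is indeed $C_{m-1}^2$, matching the paper's value of $f^{(2)}_{n,1}(123)$), but these reproduce exactly the cases the paper already establishes, and checking a formula on boundary cases is not a proof of it.

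There is also a concrete difficulty that your outline underestimates: even writing down the ``raw multiple sum'' you intend to telescope is nontrivial. Once $i$ and $j$ are fixed, the block \emph{sizes} are forced, but the fillings are not independent: the two corner blocks must jointly avoid $123$ together with the dots in the four decreasing edge blocks (a $123$ pattern can use one dot from a corner and one from an adjacent decreasing run), and one must also sum over the interleavings of values and positions across blocks. This is precisely the multi-parameter refinement that forced Elizalde, for the related quantity $f^{(2)}_n(123)$, into an elaborate Dyck-path analysis, and it is presumably why the authors left the refined statement as a conjecture. A Zeilberger certificate for the final identity would be a legitimate proof if you produced one, but naming the algorithm is not the same as exhibiting the certificate, and the unusual truncation bound $\floor{(m+1)/3}$ suggests the recurrence structure is not routine. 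As it stands, your proposal is a plausible research program, not a proof, and the statement remains open.
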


\begin{rem}
Combining \eqref{eq:twoFP} with the above conjecture, we get an expression for $f^{(2)}_n(123)$ that is simpler and more efficient than the existing ones. When implemented in Sage, \eqref{eq:twoFP} computed the first 50 terms of $f^{(2)}_n(123)$ in less than 0.8 seconds, and the first 100 terms in less than 6 seconds. On the other hand, the formula from \cite[Theorem~3.3]{Eli04} took more than 45 minutes to find the first 50 terms.
\end{rem}



\begin{thebibliography}{99}
\bibitem{ARW16} F.~Ardila, F.~Rinc\'on, L.~Williams, Positroids and non-crossing partitions, {\em Trans. Amer. Math. Soc.} \textbf{368} (2016), no. 1, 337--363.
%
\bibitem{BKP18} J.-L.~Baril, S.~Kirgizov, A.~Petrossian, Forests and pattern-avoiding permutations modulo pure descents, {\em Pure Math. Appl. (PU.M.A.)} \textbf{27} (2018), no. 1, 18--31.
%
\bibitem{NB14} N.~Blitvi{\'c}, Stabilized-interval-free permutations and chord-connected permutations, {\em Discrete Math. Theor. Comput. Sci. Proc.} {\bf AT}, 2014, 801-814.
%
\bibitem{Callan04} D.~Callan, Counting stabilized-interval-free permutations, {\em J. Integer Seq.} \textbf{7} (2004), Article 04.1.8.
%
\bibitem{Comtet72} L.~Comtet, Sur les coefficients de l'inverse de la s{\'e}rie formelle $\sum n!t^n$,  {\em C. R. Acad. Sci. Paris S{\'e}r. A-B} \textbf{275} (1972), A569--A572.
%
\bibitem{Eli04} S.~Elizalde, Multiple pattern avoidance with respect to fixed points and excedances, {\em Electron. J. Combin.} \textbf{11} (2004), \#R51, 40 pp.
%
\bibitem{GKZ16} A.~L.~L.~Gao, S.~Kitaev, P.~B.~Zhang, On pattern avoiding indecomposable permutations, {\em Integers} \textbf{18} (2018), Paper No. A2, 23 pp.
%
\bibitem{Kitaev11} S.~Kitaev, \emph{Patterns in permutations and words}, Monographs in Theoretical Computer Science, an EATCS Series, Springer, Heidelberg, 2011.
%
\bibitem{RSZ03} A. Robertson, D. Saracino, D. Zeilberger, Refined Restricted Permutations, {\em Ann. Comb.} \textbf{6} (2003), 427--444.
%
\bibitem{SiSch85} R.~Simion, F.~W.~Schmidt, Restricted permutations, {\em Europ. J. Combin.} \textbf{6} (1985), 383--406.
%
\bibitem{oeis} N. J. A.~Sloane, The On-Line Encyclopedia of Integer Sequences, \url{http://oeis.org}.
%
\end{thebibliography}
\end{document}